\newcommand{\delo}{\partial \Omega}
\renewcommand{\div}{\mathrm{div}}
\newcommand{\dom}{\partial\Omega}
\newcommand{\R}{{\mathbb R}}
\newcommand{\diff}{\ensuremath{\:\text{d}}}
\def\vint{\mathop{\mathchoice%
          {\setbox0\hbox{$\displaystyle\intop$}\kern 0.22\wd0%
           \vcenter{\hrule width 0.6\wd0}\kern -0.82\wd0}%
          {\setbox0\hbox{$\textstyle\intop$}\kern 0.2\wd0%
           \vcenter{\hrule width 0.6\wd0}\kern -0.8\wd0}%
          {\setbox0\hbox{$\scriptstyle\intop$}\kern 0.2\wd0%
           \vcenter{\hrule width 0.6\wd0}\kern -0.8\wd0}%
          {\setbox0\hbox{$\scriptscriptstyle\intop$}\kern 0.2\wd0%
           \vcenter{\hrule width 0.6\wd0}\kern -0.8\wd0}}%
          \mathopen{}\int}
\newtheorem{theorem}{Theorem}[section]
\newtheorem{lemma}[theorem]{Lemma}
\newtheorem{proposition}[theorem]{Proposition}
\newtheorem{definition}{Definition}[section]
\begin{document}
\title[Operators satisfying Carleson condition]{Regularity and Neumann problems for operators with real coefficients satisfying Carleson conditions}

\author{Martin Dindo\v{s}}
\address{School of Mathematics, \\
The University of Edinburgh and Maxwell Institute of Mathematical Sciences, Edinburgh UK}
\email{M.Dindos@ed.ac.uk}

\author{Steve Hofmann}
\address{Dept. of Mathematics,\\
University of Missouri, US}
\email{hofmanns@missouri.edu}
\thanks{The second author was supported by NSF grant number DMS-2000048.}

\author{Jill Pipher}
\address{Dept. of Mathematics, \\ 
 Brown University, US}
\email{jill\_pipher@brown.edu}

\begin{abstract}
In this paper, we continue the study of a class of second order elliptic operators of the form $\mathcal L=\div(A\nabla\cdot)$ in a domain above a Lipschitz graph in 
$\mathbb R^n,$
where the coefficients of the matrix $A$ satisfy a Carleson measure condition, expressed as a condition on the oscillation on Whitney balls. For this class of operators, it 
is known (since 2001) that the $L^q$ Dirichlet problem is solvable for some $1 < q < \infty$. Moreover, further studies completely
resolved the range of $L^q$ solvability of 
the Dirichlet, Regularity, Neumann problems in Lipschitz domains, when the Carleson measure norm of the oscillation is sufficiently small. 

We show that there exists $p_{reg}>1$ such that for all $1<p<p_{reg}$ the $L^p$ Regularity problem for the operator $\mathcal L=\div(A\nabla\cdot)$ is solvable. Furthermore $\frac1{p_{reg}}+\frac1{q_*}=1$ where $q_*>1$ is the number such that the $L^q$ Dirichlet problem for the adjoint operator $\mathcal L^*$ is solvable for all $q>q_*$. 

Additionally when $n=2$, there exists $p_{neum}>1$ such that for all $1<p<p_{neum}$ the $L^p$ Neumann problem for the operator $\mathcal L=\div(A\nabla\cdot)$ is solvable. Furthermore $\frac1{p_{reg}}+\frac1{q^*}=1$ where $q^*>1$ is the number such that the $L^q$ Dirichlet problem for the operator $\mathcal L_1=\div(A_1\nabla\cdot)$ with matrix $A_1=A/\det{A}$ is solvable for all $q>q^*$. 
\end{abstract}

\maketitle

\section{Introduction}
In this paper, we consider the solvability of certain boundary value problems (Regularity, Neumann) for a class of 
elliptic second order divergence form equations with real coefficients satisfying a natural and well-studied Carleson measure condition. Some of
the extensive literature in this subject includes  \cite{ Dsystems, DHM, DLP, DPP, DR, DPR, HMMTZ, HLM, HMM, KP}.

The operators we study have the form  $L= \mbox{div}(A
\nabla)$ where the matrix $A(X)=(a_{ij}(X))$ is uniformly elliptic in the
sense that there exists a positive constant $\Lambda$ such that
$$
\Lambda|\xi|^2 \le \sum_{i,j} a_{ij}(X)\xi_i\xi_j \le
\Lambda^{-1}|\xi|^2,
$$
for all $X$ and all $\vec{\xi}\in \R^n$. 
The matrix $A$ is not assumed to be symmetric, and the non-symmetry imposes constraints on both the methodology and on the range of the 
sharp results.
The class of operators we study have coefficients that are assumed to satisfy the following condition on gradient of $A$, or a related weaker oscillation condition,
 namely that
\begin{equation}\label{CarlX}
d\mu(X)=|\nabla A(X)|^2\delta(X)\qquad\mbox{is a Carleson measure, and} \qquad \delta(X) |\nabla A(X)| \le C,
\end{equation}
where $\delta(X)$ is the distance of a point $X\in\Omega$ to the boundary. 
\smallskip

This condition first appeared  in a conjecture of Dahlberg, where it arose via a specific change of variables \eqref{CHV} introduced by Dahlberg, Kenig-Stein, and by Ne\v{c}as
(\cite{Dahl}, \cite{N}). This change of variable is relevant to the study of smooth equations in Lipschitz domains. In particular,  an example of an operator satisfying the above Carleson condition is the divergence
form operator generated from the pull-back of the Laplacian under the change of variable in \eqref{CHV}.

There are three well-studied boundary value problems that have natural physical interpretations for these elliptic equations: Dirichlet, Regularity, and Neumann. 
The Dirichlet problem prescribes boundary data of the solution in a domain, Regularity prescribes the tangential derivative of the boundary data as well, and Neumann
prescribes the normal (or co-normal) derivative of the solution. The precise definition of these problems, for data in Lebesgue spaces, will be given in section 2.

In
\cite{KP}, the methods of \cite{KKPT} were applied to study the Dirichlet problem for a class of divergence form operators satisfying \eqref{CarlX}.
There is a constant associated with the 
``size" of this Carleson measure, referred to as the Carleson norm. See section 2 for the definition of Carleson measure, and its norm.  
This Carleson condition on the coefficients of the matrix implies that $|\nabla a_{i,j}(X)|$ is bounded {\it away} from the boundary, but could blow up near the boundary.

The main result of \cite{KP} is that, for an operator in this class, defined in a Lipschitz domain, the elliptic measure and surface Lebesgue measure on such domains are mutually absolutely continuous. 
Moreover, there exists a $p < \infty$ such that the Dirichlet problem with boundary data in the Lebesgue $L^p$ space with respect to 
surface measure is solvable. 

In \cite{DPP}, it was shown that the range of solvability in $L^p$ of the Dirichlet problem for these operators could be extended to 
the full range of $1<p<\infty$ if the Carleson norm of the matrix coefficients was sufficiently small. 
Operators whose coefficients satisfy this {\it small Carleson norm} condition also arise naturally. For example, consider the Laplacian in the region above a graph $t = \varphi(x)$.
If the function $\varphi$ has the property that  $\nabla \varphi \in L^{\infty} \bigcap$ {\it
VMO}, a weaker property than $\varphi \in C^1$, the solvability of the Dirichlet problem in $L^p$ for $1 < p < \infty$ is a corollary of the main theorem of \cite{DPP}
via a change of variable. The function space $VMO$, introduced by Sarason (\cite{Sa}) consists of those $BMO$ (bounded mean oscillation) functions that can be approximated in $BMO$ norm by $C^\infty$ functions.

The Regularity and Neumann problems have also been studied for the class of elliptic operators whose coefficients satisfy \eqref{CarlX}, but only with additional assumptions.
If the Carleson norm of the expression in \eqref{CarlX}  is sufficiently small, both of these boundary value problems 
were shown to be solvable in the full range of $1<p<\infty$, first in two dimensions in 
\cite{DR}, and later in all dimensions in \cite{DPR}.

A central problem to complete the study of these
operators in Lipschitz domains (or even smooth domains) has been open since the 2001 results of \cite{KP}: the solvability of the Regularity and Neumann problems when the Carleson measure norm is merely finite, as opposed to small.

In this paper, we fully resolve solvability of Regularity problems in all dimensions on Lipschitz graph domains, as well as the Neumann problems in two dimensions. In fact, we give two different proofs of solvability of the two-dimensional Regularity problem.

Solvability in two dimensions, and subsequent passage from two dimensions to all dimensions, has been quite typical for progress in this subject.
One example, from the two-dimensional work of \cite{DR} to the higher dimensional generalizations \cite{DPR} was mentioned earlier. In another example, the passage to higher dimensions 
required the development
of major new ideas: the solvabilty
of the Dirichlet and Regularity problems in \cite{HKMP1} and \cite{HKMP2} came fifteen years after the two-dimensional results of \cite{KKPT2}, and used
the toolbox developed in connection with the solution to Kato's conjecture. The innovation in two dimensions that is applicable to the problem we solve
in this paper is a change of variable
observed by Feneuil in \cite{F} in another context, and is described in Section 4.
In Section 5, we present the new idea that leads to solvability of the Regularity problem in all dimensions $n \geq 2$.

Importantly, condition \eqref{CarlX} can be replaced by a weaker \lq\lq oscillation" condition of coefficients on Whitney balls; we will state our main result in this form.

\begin{theorem}\label{RNduality}Let $\Omega$ be 
an unbounded Lipschitz domain in $\mathbb R^n$, $n\ge 2$ of the form $\{(x,t):\, t>\phi(x)\}$ for some Lipschitz function $\phi:\mathbb R\to\mathbb R$.
Let $A:\Omega\to M_{n\times n}(\mathbb R)$ be a real matrix valued functions such that for some $\lambda,\Lambda>0$ we have
\begin{equation}\label{Ellip}
\langle A(X)\xi,\xi\rangle\ge\lambda|\xi|^2,\qquad \left|\langle A(X)\xi,\eta\rangle\right|\le\Lambda|\xi||\eta|,\quad\mbox{for all }\xi,\eta\in\mathbb R^n
\end{equation}
and a.e. $X\in\Omega$.

Suppose that $A$ satisfies a Carleson condition on oscillation on coefficients in Whitney balls in $\Omega$, that is
\begin{equation}\label{CC}
\delta(X)^{-1}\left[\sup_{Y,Z\in B(X,\delta(X)/2)}|A(Y)-A(Z)|\right]^2\mbox{ is a Carleson measure.}
\end{equation}

Then there exists $p_{reg}>1$ such that for all $1<p<p_{reg}$ the $L^p$ Regularity problem for the operator $\mathcal L=\div(A\nabla\cdot)$ is solvable. Furthermore $\frac1{p_{reg}}+\frac1{q_*}=1$ where $q_*>1$ is the number such that the $L^q$ Dirichlet problem for the adjoint operator $\mathcal L^*$ is solvable for all $q>q_*$. 

Additionally when $n=2$, there exists $p_{neum}>1$ such that for all $1<p<p_{neum}$ the $L^p$ Neumann problem for the operator $\mathcal L=\div(A\nabla\cdot)$ is solvable. Furthermore $\frac1{p_{reg}}+\frac1{q^*}=1$ where $q^*>1$ is the number such that the $L^q$ Dirichlet problem for the operator $\mathcal L_1=\div(A_1\nabla\cdot)$ with matrix $A_1=A/\det{A}$ is solvable for all $q>q^*$. 
\end{theorem}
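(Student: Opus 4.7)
The plan splits naturally along two axes. The Regularity half, valid in all dimensions, points to a Kenig--Pipher-style duality between the $L^p$ Regularity problem for $\mathcal L$ and the $L^{p'}$ Dirichlet problem for the adjoint $\mathcal L^*$ (reflected in the relation $1/p_{reg}+1/q_*=1$). The Neumann half is two-dimensional and, via the appearance of $A_1=A/\det A$, signals the classical 2D conjugate-function construction, which converts a Neumann problem for $\mathcal L$ into a Regularity problem for an operator whose adjoint is $\mathcal L_1$, thereby reducing it to the first half of the theorem.

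For the Regularity estimate I would fix data $f$ with tangential gradient $\nabla_T f \in L^p(\partial\Omega)$ for $p<p_{reg}$ and let $u$ solve $\mathcal L u=0$ with $u|_{\partial\Omega}=f$. The boundary value of $\nabla u$ splits into its tangential piece (already controlled by $\nabla_T f$) and its conormal piece $\partial_\nu^A u$, so controlling $\|N(\nabla u)\|_{L^p}$ reduces to an $L^p$ bound on $\partial_\nu^A u$. I would pair $\partial_\nu^A u$ against an arbitrary $h \in L^{p'}=L^{q_*}(\partial\Omega)$, let $w$ solve the Dirichlet problem $\mathcal L^* w=0$ with boundary data $h$ (available by hypothesis on $\mathcal L^*$), and use a Green-type integration-by-parts to obtain schematically
\begin{equation*}
\left|\int_{\partial\Omega} h\,\partial_\nu^A u\,d\sigma\right| \lesssim \|\nabla_T f\|_{L^p}\,\|N(w)\|_{L^{p'}} \lesssim \|\nabla_T f\|_{L^p}\,\|h\|_{L^{p'}},
\end{equation*}
the last inequality being the $L^{q_*}$ Dirichlet estimate for $\mathcal L^*$. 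Taking the supremum over $h$ yields the desired Regularity bound. In two dimensions the pairing can be closed via Feneuil's change of variable from Section 4, which converts $\mathcal L$ into an operator amenable to direct analysis; in all dimensions the new mechanism from Section 5 is what supplies the missing ingredient for this duality.

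For the 2D Neumann statement I would define a conjugate $v$ of $u$ via $\nabla v = J A \nabla u$ where $J$ is the $\pi/2$ rotation, and verify by direct computation that $\div(B\nabla v)=0$ for a matrix $B$ of the form $A^T/\det A$, that $B$ inherits both ellipticity and the oscillation-Carleson condition \eqref{CC}, that $\partial_\nu^A u = \pm\partial_T v$ on $\partial\Omega$, and that $|\nabla u|$ and $|\nabla v|$ are pointwise comparable. The $L^p$ Neumann problem for $\mathcal L$ is thereby equivalent to the $L^p$ Regularity problem for the operator with matrix $B$, whose adjoint is exactly $\mathcal L_1 = \div((A/\det A)\nabla\cdot)$; applying the Regularity half of the theorem to this adjoint pair yields the relation $1/p_{neum}+1/q^*=1$.

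The principal obstacle is executing the duality step honestly when $A$ is non-symmetric and satisfies only the oscillation-Carleson condition \eqref{CC} rather than the gradient condition \eqref{CarlX}. Non-symmetry produces cross terms in the integration by parts involving the antisymmetric part of $A$, which must be absorbed via Carleson-measure estimates of exactly the type supplied by \eqref{CC}. Moreover, closing the boundary pairing requires a Dahlberg--Jerison--Kenig-type square-function / non-tangential-maximal-function equivalence for $\nabla u$ valid under \eqref{CC} alone; this will likely necessitate either a preliminary approximation of $A$ by smoother matrices with uniform Carleson control, or else the genuinely new interior argument of Section 5, which should dispense with such smoothing and, crucially, treat $n\geq 3$ where no conjugate-function substitute is available.
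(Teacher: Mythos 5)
Your 2D Neumann reduction is essentially the paper's: the conjugate-function (Kenig--Rule) construction sending $\mathcal Lu=0$ to a solution of the operator with matrix $A^t/\det A$, exchanging tangential and conormal derivatives, and noting that this matrix again satisfies \eqref{CC}, is exactly how the paper converts the Neumann claim into the Regularity claim, with the adjoint matrix $A/\det A$ producing the exponent relation. That half is fine.

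The Regularity half, however, has a genuine gap. First, your reduction ``controlling $\|N(\nabla u)\|_{L^p}$ reduces to an $L^p$ bound on $\partial_\nu^A u$'' is unjustified: the Regularity problem requires a nontangential maximal function estimate for $\nabla u$ in the interior, and with no layer-potential or other representation available here, an $L^p$ bound on the boundary trace of the conormal derivative does not yield it. The paper never argues this way; it estimates $\|\tilde N_{1,\varepsilon}(\nabla u)\|_{L^q}$ directly by tent-space duality (Lemma \ref{l1}, from Kenig--Pipher), pairing $\nabla u$ against an \emph{interior} vector field $\vec h$ and solving the inhomogeneous adjoint problem $\mathcal L^*v=\div\vec h$ with $v|_{\partial\Omega}=0$, not by pairing $\partial_\nu^A u$ with boundary data of an adjoint Dirichlet solution. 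Second, your schematic inequality $\bigl|\int_{\partial\Omega}h\,\partial_\nu^A u\,d\sigma\bigr|\lesssim\|\nabla_T f\|_{L^p}\|N(w)\|_{L^{p'}}$ is precisely where the entire difficulty of the theorem is concentrated: the integration by parts generates solid error terms involving $\nabla A$ against products of gradients of $u$ and of the dual solution, and these cannot be absorbed using \eqref{CC} alone. The paper closes the estimate only after substituting an auxiliary solution $\tilde u$ of the block-form operator $\mathcal L_0=\div_\parallel(A_\parallel\nabla_\parallel\cdot)+\partial_t^2$ with the same boundary data and \emph{assuming} solvability of the Regularity problem for $\mathcal L_0$ (Theorem \ref{t1}); proving that block-form solvability is itself the main new content, done in two dimensions by iterating Feneuil's change of variables until $a_{11}=1$ (so $\mathcal L_0=\Delta$), and in all dimensions by Theorem \ref{tblock}, via the rescaling $A_\parallel(x,kt)$ to make the tangential-gradient Carleson norm small and a weakly coupled system/good-$\lambda$ argument for $\partial_m u$. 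Your phrases ``the new mechanism from Section 5 supplies the missing ingredient'' and ``Feneuil's change of variable converts $\mathcal L$ into an operator amenable to direct analysis'' are placeholders for exactly these steps, so the proposal does not contain the argument that makes the duality work. (A smaller structural point: the paper does not need to run the duality at every $p<p_{reg}$; it proves solvability at a single exponent after mollifying $A$ to satisfy the stronger pointwise-gradient Carleson condition, and then invokes the known Regularity--Dirichlet duality of Dindo\v{s}--Kirsch together with \cite{KP} to obtain the sharp range $1<p<p_{reg}$ with $1/p_{reg}+1/q_*=1$.)
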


We remark that, for simplicity, we have stated the result for domains of the form $\{(x,t):\, t>\phi(x)\}$, but the arguments should carry over to the setting of bounded Lipschitz domains. This will require some further localization results which are
not addressed in this paper.

One of the main tools we use to prove Theorem \ref{RNduality} is Theorem \ref{t1} below, which reduces the solvability of the Regularity problem for matrices whose coefficients satisfy condition (\ref{CC})
to solvability of the Regularity problem for an operator defined by a block-form matrix. This
theorem holds in all dimensions. Its proof, in Section 3, can be read independently of its application to the solution of Regularity and Neumann problems which are located in Sections 4 and 5.

\begin{theorem}\label{t1} Let $\mathcal L=\div(A\nabla\cdot)$ be an operator in ${\mathbb R}^n_+$ where matrix $A$ is uniformly elliptic, with bounded real coefficients such that there exists a constant $C$
\begin{equation}\label{Carl2}
|\nabla A|^2t\,dt\,dx\qquad\mbox{is a Carleson measure, and} \qquad t |\nabla A| \le C.
\end{equation}
Suppose that for some $p>1$ the $L^p$ Regularity problem for the block form operator
\begin{equation}\label{e0a}
{\mathcal L}_0 u= \mbox{\rm div}_\parallel(A_\parallel \nabla_\parallel u)+u_{tt},
\end{equation}
(where $A_{\parallel}$ is the matrix $(a_{ij})_{1\le i,j\le n-1}$) is solvable in ${\mathbb R}^n_+$. \vglue3mm

Then we have the following: For any $1<q<\infty$ the $L^q$ Regularity problem for the operator $\mathcal L$ is solvable in ${\mathbb R}^n_+$ if and only if the $L^{q'}$ Dirichlet problem for the adjoint operator $\mathcal L^*$ is solvable in ${\mathbb R}^n_+$.
\end{theorem}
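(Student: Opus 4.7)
The plan is to establish the equivalence by proving each implication separately, modelling the argument on classical Kenig--Pipher type duality but leveraging the block-form Regularity hypothesis to close the harder direction.

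For the direction \emph{$L^q$ Regularity for $\mathcal L$ $\Rightarrow$ $L^{q'}$ Dirichlet for $\mathcal L^*$}, I would argue by duality via a truncated Green's identity. Given boundary data $f\in L^{q'}(\partial\R^n_+)$ and a candidate adjoint solution $v$, control of the non-tangential maximal function of $v$ is obtained by testing against $g\in L^q$ of compact support. Build $u$ solving $\mathcal L u=0$ using the $L^q$ Regularity hypothesis applied to an antiderivative of $g$ on $\partial\R^n_+$, so that $\|\tilde N(\nabla u)\|_{L^q}\lesssim \|g\|_{L^q}$. The identity
\[
\int_{\R^n_+}\bigl(\nabla v\cdot A\nabla u-\nabla u\cdot A^t\nabla v\bigr)\,dX=0,
\]
together with the identification of the co-normal derivatives on the boundary, then converts the $L^q$ control of $u$ into the required $L^{q'}$ non-tangential bound on $v$.

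For the harder direction \emph{$L^{q'}$ Dirichlet for $\mathcal L^*$ $\Rightarrow$ $L^q$ Regularity for $\mathcal L$}, decompose $A=A_0+B$, where $A_0$ is the block matrix formed from $A_\parallel$ together with the entry $1$ in the normal position, and $B$ collects the last row/column entries and the $(n,n)$-entry adjustment. Condition (\ref{Carl2}) makes $\delta^{-1}|B|^2\,dX$ and $\delta|\nabla B|^2\,dX$ Carleson. Given Regularity data $f$ with $\nabla_T f\in L^q$, first solve the block problem $\mathcal L_0 u_0=0$ with data $f$ using the hypothesis (after a Shen-type extrapolation from the given exponent $p$ to $q$, which the block structure readily permits), then correct via the identity $\mathcal L u=\mathcal L_0 u+\divt(B\nabla u)$ by a Carleson perturbation of Dahlberg--Kenig--Pipher type. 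The $L^{q'}$ adjoint Dirichlet bound enters via tent-space duality to dualize the inhomogeneous divergence-form term, producing an a priori estimate $\|\tilde N(\nabla u)\|_{L^q}\lesssim \|\nabla_T f\|_{L^q}$; uniqueness of the solution so produced is then supplied by the first direction, applied in reverse.

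The main obstacle will be closing this Carleson perturbation estimate: the substitution $\mathcal L u=\mathcal L_0 u+\divt(B\nabla u)$ introduces $\nabla u$ on the right, so absorption requires a pairing of the form
\[
\left|\int_{\R^n_+} B\nabla u\cdot\nabla v\,dX\right|\lesssim \|S(\nabla u)\|_{L^q}\,\|C(B\nabla v)\|_{L^{q'}},
\]
where $v$ is the adjoint Dirichlet solution and the Carleson functional on the right is finite precisely thanks to (\ref{Carl2}) and the non-tangential $L^{q'}$ estimates for $\nabla v$. A secondary technical difficulty is verifying that the corrected candidate solution attains the prescribed boundary data in the non-tangential sense, which requires the trace theory adapted to the oscillation Carleson setting rather than the pointwise-gradient setting of classical Dahlberg--Kenig--Stein.
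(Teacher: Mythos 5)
Your first direction is fine, and in fact the paper does not reprove it: the implication (Regularity for $\mathcal L$) $\Rightarrow$ (Dirichlet for $\mathcal L^*$) is quoted from earlier work, and the whole content of the theorem is the converse. It is in the converse that your proposal has a genuine gap. You decompose $A=A_0+B$, where $A_0$ is the block matrix and $B$ carries the entries $a_{in},a_{nj}$ ($i,j<n$) and $a_{nn}-1$, and you claim that \eqref{Carl2} makes $\delta^{-1}|B|^2\,dX$ (and $\delta|\nabla B|^2\,dX$) Carleson, so that $\mathcal L$ is a Carleson perturbation of $\mathcal L_0$. This is false: \eqref{Carl2} controls only the \emph{gradient} (equivalently the Whitney oscillation) of $A$, not the size of the off-block entries. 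The coefficients $a_{in}$, $a_{nj}$, $a_{nn}-1$ have no reason to vanish, or even to become small, as one approaches the boundary, so $\delta^{-1}|B|^2\,dX$ has infinite Carleson norm in general (already $\int_0^r t^{-1}\,dt$ diverges when $|B|\simeq 1$). Carleson perturbation theory of Kenig--Pipher/Fefferman--Kenig--Pipher type requires the \emph{discrepancy} of the two matrices to satisfy a vanishing-type Carleson condition, and that is exactly what is missing here; if it were available, Theorem \ref{t1} would be an immediate corollary of known perturbation results and would not need a new proof. Relatedly, your proposed pairing estimate puts $\|S(\nabla u)\|_{L^q}$ (or some non-tangential quantity of the unknown $u$) on the right-hand side, which is essentially the quantity you are trying to bound, and there is no smallness anywhere in $B$ to absorb it.

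The paper closes this loop by a different mechanism. It estimates $\|\tilde N_{1,\varepsilon}(\nabla u)\|_{L^q}$ by tent-space duality against a vector field $\vec h$ (Kenig--Pipher), solves the \emph{inhomogeneous} adjoint problem $\mathcal L^*v=\div\vec h$ with $v=0$ on the boundary (so the assumed $L^{q'}$ Dirichlet solvability yields $\|N(v)\|_{L^{q'}}+\|S(v)\|_{L^{q'}}\lesssim 1$), and integrates by parts until everything lives on the boundary, where only $f=u|_{\partial\Omega}$ appears. At that point $u$ is \emph{replaced in the interior} by the block-form solution $\tilde u$ of $\mathcal L_0\tilde u=0$ with the same data $f$, for which the hypothesis gives $\|\tilde N(\nabla\tilde u)\|_{L^q}+\|S(\nabla\tilde u)\|_{L^q}\lesssim\|\nabla_\parallel f\|_{L^q}$; the subsequent error terms are handled by square-function/non-tangential duality, a stopping-time argument exploiting that $|\partial_tA|^2t\,dX$ is Carleson, and a cancellation of the leftover boundary term. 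So the block hypothesis is used to supply a well-behaved substitute solution with the same trace, not to set up a perturbation of the operator; your approach as written cannot be repaired without an idea of this kind.
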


One direction of the equivalence in this theorem has been proven without any of the stated assumptions \eqref{Carl2} or \eqref{e0a}. Namely, the solvability
of the $L^q$ Regularity problem for the operator $\mathcal L$ implies the solvability of the $L^{q'}$ Dirichlet problem for the adjoint operator $\mathcal L^*$ for
$q' = q/(q-1).$ (c.f. \cite{DK, KP1}). Thus the novelty in Theorem \ref{t1} is the converse direction.

Finally, in Section 5 we prove that the Regularity problem for operators \eqref{e0a} is solvable under the following assumptions:

\begin{theorem}\label{tblock}
Let ${\mathcal L}_0 u= \mbox{\rm div}_\parallel(A_\parallel \nabla_\parallel u)+u_{tt}$ be an operator in ${\mathbb R}^n_+$ where matrix $A_\parallel$ is uniformly elliptic $(n-1)\times(n-1)$ matrix, with bounded real coefficients such that \begin{equation}\label{Carl2m}
d\mu(X)=\delta(X)\left[\sup_{B(X,\delta(X)/2)}|\nabla A(X)|\right]^2\,dX\qquad\mbox{is a Carleson measure.}
\end{equation}
Then we have the following: For any $1<q<\infty$ the $L^q$ Regularity problem for the operator $\mathcal L_0$ is solvable in ${\mathbb R}^n_+$.
\end{theorem}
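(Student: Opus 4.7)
The plan is to establish the a priori estimate
\[
\|\tilN{u}\|_{L^q(\partial\mathbb R^n_+)}\le C\|\nabla_\parallel f\|_{L^q(\partial\mathbb R^n_+)},\qquad 1<q<\infty,
\]
for solutions $u$ of $\mathcal L_0 u=0$ with boundary trace $f$. I would start at $q=2$ and then extrapolate to the full range. The natural tool at $L^2$ is a Rellich-type identity: since the block structure forces $Ae_n=e_n$ (so $A\nabla u\cdot\nu=-\partial_n u$ at the boundary), multiplying $\mathcal L_0 u=0$ by $\partial_n u$ and integrating by parts over $\mathbb R^n_+$ yields
\[
\int_{\partial\mathbb R^n_+}(\partial_n u)^2\,dx = \int_{\partial\mathbb R^n_+}(A_\parallel^s\nabla_\parallel f)\cdot\nabla_\parallel f\,dx + \int_{\mathbb R^n_+}(\partial_n A_\parallel^s)\nabla_\parallel u\cdot\nabla_\parallel u\,dX,
\]
where $A_\parallel^s$ denotes the symmetric part of $A_\parallel$. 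The first boundary integral is immediately dominated by $C\|\nabla_\parallel f\|_{L^2}^2$, so the whole $L^2$ Regularity problem reduces to controlling the volume error term.

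For the volume integral, a naive Cauchy-Schwarz split $|\partial_n A_\parallel|=\sqrt{\delta}\,|\partial_n A_\parallel|\cdot\delta^{-1/2}$ gives one factor that is controlled by Carleson embedding applied to the measure $\delta|\nabla A|^2\,dX$ from \eqref{Carl2m}, but leaves behind the uncontrolled integral $\int_{\mathbb R^n_+}\delta^{-1}|\nabla u|^2\,dX$. I would therefore perform a stopping-time / sawtooth decomposition of $\mathbb R^n_+$ into Carleson subregions on which the localized Carleson norm of $\delta|\nabla A_\parallel|^2\,dX$ is small; on each such piece the small-norm theory of \cite{DPR} applies, producing the local Rellich absorption. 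Carleson summation over the stopping-time family then glues the local bounds into a global one. Combining this with the equivalence $\|\tilN{u}\|_{L^2}\sim\|\nabla u|_{\partial}\|_{L^2}$ (square function versus nontangential maximal function, standard for block operators) closes the $L^2$ Regularity estimate. The extension to $1<q<\infty$ then follows by a Calder\'on-Zygmund / good-$\lambda$ extrapolation that exploits the scale invariance of \eqref{Carl2m} and the block structure.

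The hard part will be absorbing the volume error without any smallness hypothesis on the Carleson norm. Under \eqref{Carl2m} only finiteness is assumed, so direct Cauchy-Schwarz absorption is impossible and the novel idea announced for Section 5 must take the form of a carefully designed stopping-time decomposition with local comparison to a $t$-independent reference block operator (whose $L^p$ Regularity is classical for every $1<p<\infty$). The delicate points will be verifying that the sawtooth geometry is compatible with the block structure, that the pieces glue correctly via Carleson summation, and that the $L^p$ extrapolation survives intact without leaking into a restricted range of exponents.
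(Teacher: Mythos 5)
Your proposal does not follow the paper's route, and its central step has a genuine gap. The crux of your argument is the claim that one can perform a ``stopping-time / sawtooth decomposition of $\mathbb R^n_+$ into Carleson subregions on which the localized Carleson norm of $\delta|\nabla A_\parallel|^2\,dX$ is small,'' and then invoke the small-norm theory of \cite{DPR} on each piece. But smallness of the localized norm is not something a stopping time can manufacture: a measure with large Carleson norm $K$ can have localized norm comparable to $K$ in Carleson boxes at all scales and locations (the oscillation can be distributed self-similarly in $x$ and lacunarily in $t$), so no sawtooth family of the kind you describe need exist. Even where such a decomposition can be engineered, the ``gluing by Carleson summation'' is not a routine step: the small-norm results would have to be applied on sawtooth domains with boundary data that is no longer the original $f$, and controlling the interface errors is precisely the content of the corona-decomposition machinery of \cite{MT} used by \cite{MPT} --- a substantial piece of technology, not a step one can leave implicit. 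Two secondary problems: (i) your Rellich identity is stated as if only $\partial_n A^s_\parallel$ appears in the solid error, but for nonsymmetric $A_\parallel$ with coefficients depending on all variables the antisymmetric part generates additional volume terms involving the full gradient $\nabla A_\parallel$, and Rellich-type arguments are exactly what fail for nonsymmetric coefficients; (ii) the asserted equivalence $\|\widetilde N(\nabla u)\|_{L^2}\sim\|\nabla u\|_{L^2(\partial\mathbb R^n_+)}$ is not ``standard'' for variable, merely Carleson, nonsymmetric block coefficients --- the nontangential-versus-square-function bounds are part of what has to be proven (in the paper they occupy the good-$\lambda$ Lemmas \ref{S3:L8-alt1}--\ref{LGL-loc-alt1}).

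The paper circumvents the large-norm obstruction in a completely different way, and it is worth recording the mechanism since it is the new idea of Section 5. One replaces $A_\parallel(x,t)$ by the anisotropically rescaled $A_\parallel^k(x,t)=A_\parallel(x,kt)$; this is a Carleson perturbation of $A_\parallel$ (so solvability transfers back and forth), and, crucially, it makes small not the full Carleson norm but the norm of the \emph{horizontal-gradient} part $|\nabla_x A^k_\parallel|^2\,t\,dx\,dt$, which is $\lesssim (1+\log k)/k^2$ times the original. One then differentiates the equation only in the horizontal directions: by the block structure, $W=(\partial_1u,\dots,\partial_{n-1}u)$ solves a weakly coupled system \eqref{system} whose coupling coefficients are exactly the horizontal derivatives $\partial_m b_{ij}$, hence carry a small Carleson constant $C(k)$. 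The absorption you were hoping to get from locality is instead obtained globally from this smallness, via the $L^2$ square-function identity \eqref{system3}--\eqref{system6} and the good-$\lambda$ estimate giving $\|N(\vec\eta)\|_{L^2}\lesssim\|S(\vec\eta)\|_{L^2}+\|\nabla_x f\|_{L^2}$. The vertical derivative $\partial_t u$, whose equation involves $\partial_t B_\parallel$ (not small for any $k$), is handled last: there the dangerous terms are paired with $N(W)$ and $S(W)$, which are already controlled. If you want to pursue your decomposition route, you are essentially reconstructing the approach of \cite{MPT}, and you would need the corona decomposition of \cite{MT} in place of the unconstructed stopping time.
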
 

\medskip

The reduction to block form is a special feature of working in the Lipschitz graph domain, permitting us to exploit a ``preferred direction" and introduce Riesz transform type operators via an integration in that preferred direction. This was also a feature of the main result of \cite{HKMP2}, where solvability of the Regularity problem for so-called
``$t$-independent" elliptic divergence form operators with data in $L^p$ (for some possibly large value of $p$) was established. The methods of this paper are similarly reliant on working in domains that are locally a graph, and making use of that graph direction.
\smallskip

As we were completing this manuscript, we learned that M. Mourgoglou, B. Poggi, and X. Tolsa (\cite{MPT}) were simultaneously completing a manuscript that
showed solvability of the Regularity problem for this same class of elliptic operators, but on uniformly rectifiable domains. Their methods, necessitated 
by the weaker geometric assumptions on the domain, are very different from those in this paper. In particular, they use the results of \cite{DPR} together with a new corona decomposition introduced in \cite{MT}. While their end result is more general - due to the weaker assumptions on the domain - the ideas and techniques we present here are novel in the context of Lipschitz domain theory, give a significantly shorter and self-contained argument, and serve to illuminate the specific additional technicalities that involve passage from local Lipschitz graphs to more general domains.

\section{Background and definitions}

In this section, we will state the relevant definitions and background for domains $\Omega = \{(x,t): t > \varphi(x)\}$ where $ \varphi(x):\mathbb R^n \rightarrow \mathbb R$ is a Lipschitz function.

\begin{definition} Let $\Omega$ be as above.
For $Q\in\partial\Omega$, $X\in \Omega$ and $r>0$ we write:
\begin{align*}
\Delta_r(Q) &= \partial \Omega\cap B_r(Q),\,\,\,\qquad T(\Delta_r) = \Omega\cap B_r(Q),\\
\delta(X) &=\text{\rm dist}(X,\partial \Omega).
\end{align*}
\end{definition}

\begin{definition}\label{cmeasure}
Let $T(\Delta_r)$ be the Carleson region associated to a surface
ball $\Delta_r$ in $\delo$, as defined above. A measure $\mu$ in $\Omega$ is
Carleson if there exists a constant $C$ such that 
\[\mu(T(\Delta_r))\le C \sigma (\Delta_r).\]
The best possible $C$ is the Carleson norm and will denoted by $\|\mu\|_{Carl}$. The notation $\mu \in \mathcal C$
means that the measure  $\mu$ is Carleson. \vglue1mm 
\end{definition}

\begin{definition}\label{dgamma}
A cone of aperture $a > 0$ is a non-tangential approach region for $Q
\in \partial \Omega$ of the form
\[\Gamma_{a}(Q)=\{X\in \Omega: |X-Q|\le (1+a)\;\;{\rm dist}(X,\partial \Omega)\}.\]
\end{definition}

For ease of notation, and when there is no need for the specificity, we shall omit the dependence on the aperture of the cones in the definitions of the square function and nontangential maximal 
functions below.

\begin{definition}
The square function of a function $u$ defined on $\Omega$, relative to the family of cones $\{\Gamma(Q)\}_{Q \in \partial\Omega}$, is
\[S(u)(Q) =\left( \int_{\Gamma(Q)} |\nabla u (X)|^2
\delta(X)^{2-n} dX \right)^{1/2}\] at each $Q \in \partial
\Omega$. The
non-tangential maximal function relative to $\{\Gamma(Q)\}_{Q \in \partial\Omega}$ is
$$N(u)(Q) = \sup_{X\in \Gamma(Q)} |u(X)|$$
at each $Q \in \partial\Omega$
We also define the following variant of the non-tangential
maximal function:
\begin{equation}\label{NTMaxVar} \widetilde{N}(u)(Q) )
=\sup_{X\in\Gamma(Q)}\left(\fint_{B_{{\delta(X)}/{2}}(X)}|u(Y)|^2\diff
Y\right)^{\frac{1}{2}}.
\end{equation}
\end{definition}

When we want to emphasize dependance of square or nontangential maximal functions on the particular cone $\Gamma_a$ we shall write $S_a(u)$ or $N_a(u)$. Similarly, if we consider cones truncated at a certain height $h$ we shall use the notation $S^h(u)$, $S^h_a(u)$, $N^h(u)$ or $N^h_a(u)$. In general, the particular choice of the aperture $a$ does not matter, as operators with different apertures give rise to comparable $L^p$ norms.

In Proposition 2.5 of \cite{DPcplx}, it was shown that, under the assumption that $|\nabla A(X)| \lesssim \delta^{-1}(X)$,
a reverse H\"older inequality for the gradient of solutions holds and therefore,  $\widetilde{N}(u)(Q)$ is comparable to $N(u)(Q)$, with possibly different apertures of
the cones used to define these quantities.

We recall the definition of $L^p$ solvability of the Dirichlet problem. 
When an operator $L$ is uniformly elliptic, the Lax-Milgram lemma can be applied and guarantees the existence of weak solutions.
That is, 
given any $f\in \dot{B}^{2,2}_{1/2}(\partial\Omega)$, the homogenous space of traces of functions in $\dot{W}^{1,2}(\Omega$), there exists a unique (up to a constant)
$u\in \dot{W}^{1,2}(\Omega$) such that $Lu=0$ in $\Omega$ and ${\rm Tr}\,u=f$ on $\partial\Omega$. These \lq\lq  energy solutions" are used to define the solvability of the $L^p$ Dirichlet, Regularity and Neumann problems.

\begin{definition} Let $1<p\le\infty$.
The Dirichlet problem with data in $L^p(\partial \Omega, d\sigma)$
is solvable (abbreviated $(D)_{p}$) if for every $f\in \dot{B}^{2,2}_{1/2}(\partial\Omega) \cap  L^p(\partial\Omega)$ the weak solution $u$ to the problem $Lu=0$ with
continuous boundary data $f$ satisfies the estimate
\[\|N(u)\|_{L^p(\partial \Omega, d\sigma)} \lesssim \|f\|_{L^p(\partial \Omega, d\sigma)}.\]
The implied constant depends only the operator $L$, $p$, and the
Lipschitz norm of $\varphi$. 
\end{definition}

\begin{definition}\label{DefRpcondition}
Let $1<p<\infty$. The regularity problem with boundary data in
$H^{1,p}(\partial\Omega)$ is solvable (abbreviated $(R)_{p}$), if
for every $f\in \dot{B}^{2,2}_{1/2}(\partial\Omega)$ with 
$\nabla_T f \in L^{p}(\partial\Omega),$
the weak solution $u$ to the problem
\begin{align*}
\begin{cases}
Lu &=0 \quad\text{ in } \Omega\\
u|_{\partial B} &= f \quad\text{ on } \partial \Omega
\end{cases}
\end{align*}
satisfies
\begin{align}
\nonumber \quad\|\widetilde{N}(\nabla u)\|_{L^p(\partial
\Omega)}\lesssim \|\nabla_T f\|_{L^{p}(\partial\Omega)}.
\end{align}
The implied constant depends only the operator $L$, $p$,
and the Lipschitz norm of $\varphi$. 
\end{definition}

\begin{definition}\label{NPDefNpcondition}
Let $1<p<\infty$. The Neumann problem with boundary data in
$L^p(\partial\Omega)$ is solvable (abbreviated $(N)_{p}$), if for
every $f\in L^p(\partial\Omega)\cap \dot{B}^{2,2}_{-1/2}(\partial\Omega)$ with the property that
$\int_{\dom} fd\sigma=0$, the weak solution $u$ to the problem
\begin{align*}
\begin{cases}
Lu &=0 \quad\text{ in } \Omega\\
A\nabla u\cdot \nu &= f \quad\text{ on }
\partial \Omega
\end{cases}
\end{align*}
satisfies
\begin{align}
\nonumber \quad\|\widetilde{N}(\nabla u)\|_{L^p(\partial
\Omega)}\lesssim \|f\|_{L^{p}(\partial\Omega)}.
\end{align}
Again, the implied constant depends only the operator $L$, $p$,
and the Lipschitz norm of $\varphi$.  Here $\nu$ is the outer
normal to the boundary $\dom$. The sense in which $A\nabla u\cdot \nu = f$ on $\partial \Omega$ 
is that 
$$\int_{\Omega} A\nabla u. \nabla \eta \,\,dX = \int_{\partial \Omega} f \eta \,d\sigma,$$
for all $\eta \in C_0^{\infty}(\mathbb R^n).$
\end{definition}

We now  compile some results from other papers that will be used to prove the main theorems.

The following is a key lemma for the square function, which combines Lemmas 3.2 and 3.3 of \cite{DPR}.

\begin{lemma}\label{l1a} Let $u$ be a solution of $Lu=0$, where $L=\div(A\nabla u)$ is a uniformly elliptic differential operator
defined on $\Omega_{t_0}$ with bounded coefficients such
that
\begin{equation}\label{carl}
|\nabla A|^2\delta(X) \qquad\mbox{is a Carleson measure, and} \qquad \delta(X) |\nabla A| < C.
\end{equation}

Then there exists $K>0$ depending only on the Lipschitz constant
of the domain $\Omega$, the Carleson norm of $|\nabla A|^2\delta(X)$, the ellipticity constant of $L$, and the dimension
$n$ such that
\begin{eqnarray}
\label{e1} &&\int_{\partial\Omega}S^2_{r/2}(\nabla u)\,d\sigma
\simeq \iint_{\dom\times (0,r/2)}|\nabla(\nabla  u(X))|^2
\delta(X)\,dX
\\\nonumber &\le &K\left[\int_{\dom}
|\nabla u|^2\,d\sigma+ \int_{\dom}N_r^2(\nabla u)
d\sigma+\frac{1}{r}\|\nabla u\|^2_{L^2(\Omega)}\right],
\end{eqnarray}
Here $N_r$ and $S_r$ denote the truncated non-tangential maximal
function and square, defined with respect to truncated cones $\Gamma_a(Q) \cap B_r(Q)$.
\end{lemma}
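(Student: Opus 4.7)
The plan is to prove the stated equivalence and upper bound in two stages, following the standard Carleson-absorbing strategy used for such square function estimates.

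First I would establish the equivalence between $\int_{\dom} S^2_{r/2}(\nabla u)\,d\sigma$ and the solid integral $\iint_{\dom\times(0,r/2)} |\nabla(\nabla u)|^2 \delta(X)\,dX$. This is a Fubini computation: for each interior point $X=(x,t)$ with $0<t<r/2$, the set of boundary points $Q$ whose non-tangential cone $\Gamma_a(Q)$ (truncated at height $r/2$) contains $X$ is a surface ball of radius comparable to $\delta(X)$, hence of surface measure comparable to $\delta(X)^{n-1}$. Integrating $|\nabla(\nabla u)|^2\delta(X)^{2-n}$ against this measure in the $Q$ variable yields exactly $|\nabla(\nabla u)|^2\delta(X)$ as the integrand in the solid integral.

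Next I would derive the main upper bound via a Caccioppoli-type energy argument applied to derivatives of $u$. Differentiating $\div(A\nabla u)=0$ in a tangential direction $x_k$ gives
\begin{equation*}
\div(A\nabla \partial_k u)=-\div\bigl((\partial_k A)\nabla u\bigr),
\end{equation*}
so each tangential derivative $\partial_k u$ solves an inhomogeneous elliptic equation whose right-hand side involves $\nabla A\cdot\nabla u$. Testing this equation against $(\partial_k u)\psi^2 \delta$, where $\psi$ is a smooth cutoff adapted to the Carleson region $\dom\times(0,r/2)$, and integrating by parts, one obtains on the left the good term $\iint A\nabla\partial_k u\cdot\nabla\partial_k u\,\psi^2\delta$, which controls $\iint|\nabla\partial_k u|^2\delta$ by ellipticity. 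The normal derivative $\partial_t u$ is then recovered from the tangential derivatives via the equation itself (the coefficient of $u_{tt}$ in $Lu$ is bounded below by ellipticity), so one recovers the full $|\nabla(\nabla u)|^2\delta$ integral. The $t=0$ boundary produces the $\int_{\dom}|\nabla u|^2\,d\sigma$ term, while the truncation at height $r$ produces $r^{-1}\|\nabla u\|^2_{L^2(\Omega)}$ after using the mean value theorem on annular shells.

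The main obstacle is handling the error terms that arise from the right-hand side $-\div((\partial_k A)\nabla u)$ and from derivatives falling on the cutoff $\psi^2\delta$. After a second integration by parts these produce integrals of the schematic forms
\begin{equation*}
\iint |\nabla A|\,|\nabla u|\,|\nabla\nabla u|\,\delta\,\psi^2, \qquad \iint |\nabla A|^2\,|\nabla u|^2\,\delta\,\psi^2.
\end{equation*}
For the first I would apply Cauchy--Schwarz with a small parameter $\eps$, using $\delta|\nabla A|\le C$ to write $|\nabla A|\,|\nabla u|\,|\nabla\nabla u|\,\delta \le \eps|\nabla\nabla u|^2\delta+\eps^{-1}|\nabla A|^2|\nabla u|^2\delta$, so the first piece is absorbed into the left side and the second reduces to the second form. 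For the Carleson integral of $|\nabla u|^2$ against $d\mu=|\nabla A|^2\delta\,dX$, I would invoke the standard Carleson-measure/non-tangential maximal function inequality
\begin{equation*}
\iint_{\Omega} |F(X)|^2\,d\mu(X)\lesssim \|\mu\|_{\mathcal C}\int_{\dom}N_r(F)^2\,d\sigma,
\end{equation*}
applied with $F=\nabla u$, which produces the $\int_{\dom}N_r^2(\nabla u)\,d\sigma$ term on the right. The pointwise bound $\delta|\nabla A|\le C$ also provides the reverse Hölder inequality for $\nabla u$ needed so that the truncated cone-square-function machinery and $N$-function interchangeably control these quantities. All of this is standard once the correct cutoff is chosen, but the bookkeeping is delicate; this is where I would expect to spend most of the effort, and it is essentially what is carried out in Lemmas~3.2 and~3.3 of \cite{DPR}.
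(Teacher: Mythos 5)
The paper itself does not prove this lemma: it is imported as a combination of Lemmas 3.2 and 3.3 of \cite{DPR}, so your sketch has to be measured against that argument. Two of your steps are fine: the Fubini equivalence between $\int_{\dom}S^2_{r/2}(\nabla u)\,d\sigma$ and the weighted solid integral, and the Carleson embedding of $\iint|\nabla A|^2|\nabla u|^2\delta\,\psi^2$ against $\int_{\dom}N_r^2(\nabla u)\,d\sigma$ (that is essentially Lemma 3.3 of \cite{DPR}); likewise recovering $\partial_t^2u$ from the tangential second derivatives via the equation is standard.

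The gap is in your accounting of the error terms in the weighted Caccioppoli step. When you test $\div(A\nabla\partial_k u)=-\div((\partial_kA)\nabla u)$ against $(\partial_ku)\psi^2\delta$, the derivative falling on the weight $\delta$ (not only on the cutoff) produces terms with \emph{no} factor of $\delta$, schematically
\[
\iint a_{nj}\,\partial_j(\partial_ku)\,(\partial_ku)\,\psi^2\,dX
\qquad\mbox{and}\qquad
\iint (\partial_k a_{nj})\,(\partial_ju)\,(\partial_ku)\,\psi^2\,dX,
\]
and these are not of either of the two schematic forms you list. They cannot be handled by the absorption scheme you describe: Cauchy--Schwarz pairing $|\nabla A|$ with $\delta^{1/2}$ leaves the uncontrolled Hardy-type factor $\iint\delta^{-1}|\nabla u|^2\psi^2$, and estimating them by $\iint|\nabla A|\,|\nabla u|^2\psi^2$ is impossible in general, since that quantity can be infinite under the hypotheses: take $\Omega=\mathbb R^n_+$, $A$ diagonal with $a_{nn}(t)=1+\tfrac12\sin(\log\log(1/t))$ for small $t$ (then $\delta|\nabla A|\le C$ and $|\nabla A|^2\delta\,dX$ is Carleson) and $u=x_1$, which solves the equation; then $\iint_{T(\Delta_r)}|\nabla A|\,|\nabla u|^2\,dX=\infty$ while every term in \eqref{e1} is finite. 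The symmetrization $a_{nj}\partial_jv\,v=\tfrac12a_{nj}\partial_j(v^2)$ followed by a further integration by parts does produce the boundary term $\int_{\dom}|\nabla u|^2\,d\sigma$ (this, rather than the $t=0$ trace of your cutoff argument, is where that term really comes from) and an $r^{-1}$ term, but it also regenerates $\iint(\partial_ja_{nj})(\partial_ku)^2\psi^2$, again of the same unweighted type. In \cite{DPR} (and in the Kenig--Pipher arguments it builds on) these terms are the heart of the proof: they are kept in divergence form and dispatched through further structured integrations by parts (together with the reduction to coefficients mollified at scale $\delta$), not by taking absolute values and absorbing. So your overall strategy is the right one, but the sentence ``after a second integration by parts these produce integrals of the schematic forms\dots'' is precisely where the proof lives, and as written that step fails.
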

\section{Proof of Theorem \ref{t1}: reduction to the block case}\label{s3}

 Let $A=(a_{ij})$ be as in Theorem \ref{t1} and set $\Omega = \mathbb R^n_+$. As explained earlier, it remains to prove one of the implications
 in the statement of the Theorem. 
 To that end, assume that the $L^{q'}$ Dirichlet problem is solvable for the adjoint operator $\mathcal L^*$ for some $q>1$.

 We want to deduce solvability of the $L^q$ Regularity problem for the operator ${\mathcal L}u=\partial_i(a_{ij}\partial_j u)$ 
 if, in addition, we have solvability of the $L^q$ Regularity problem for the block-form operator
\begin{equation}\label{e0}
{\mathcal L}_0 u= \mbox{\rm div}_\parallel(A_\parallel \nabla_\parallel u)+u_{tt},
\end{equation}
where $A_{\parallel}$ is the matrix $(a_{ij})_{1\le i,j\le n-1}.$

 Throughout this section, we make the assumption that $|\nabla A(x,t)|$ is bounded by a constant $M$ for all $(x,t)$.
 All the estimates established below will be independent of $M$. This assumption entails that boundary integrals like those in
 \eqref{e8}, \eqref{e9}, and so on, are meaningful in a pointwise sense. The assumption can be removed by approximating a matrix
 that satisfies condition \eqref{Carl2} by a sequence of matrices with bounded gradients - details can be found in
 section 7 of \cite{Dsystems}.

We start by summarising useful results from \cite{KP2}. Let us denote by $\tilde{N}_{1,\varepsilon}$ the $L^1$-averaged version of the non-tangential maximal function for {\it doubly truncated} cones. That is, for $\vec{u}:\Omega\to\mathbb R^m$, we set

$$\tilde{N}_{1,\varepsilon}(\vec{u})(Q)=\sup\left\{\vint_{Z\in B(X,\delta(X)/2)}|\vec{u}|dZ:\, X\in\Gamma_\varepsilon(Q):=\Gamma(Q)\cap
\{X: \varepsilon < \delta(X)< 1/\varepsilon\}\right\}.$$

Lemma 2.8 of \cite{KP2}, stated below, provides a way to estimate the $L^q$ norm of $\tilde{N}_{1,\varepsilon}(\nabla F)(Q)$ via duality (based on tent-spaces).

\begin{lemma}\label{l1} There exists $\vec{\alpha}(X,Z)$ with $\vec{\alpha}(X,\cdot):B(X,\delta(X)/2)\to{\mathbb R}^n$ and \newline $\|\vec\alpha(X,\cdot)\|_{L^\infty(B(X,\delta(X)/2))}=1$, a nonnegative scalar function $\beta(X,Q)\in L^1(\Gamma_\varepsilon(Q))$ with $\int_{\Gamma_\varepsilon(Q)}\beta(X,Q)\,dX=1$ and a nonnegative $g\in L^{q'}(\partial\Omega,d\sigma)$ with $\|g\|_{L^{q'}}=1$ such that
\begin{equation}
\left\|\tilde{N}_{1,\varepsilon}(\nabla F)\right\|_{L^q(\partial\Omega,d\sigma)}\lesssim \int_{\Omega}\nabla F(Z)\cdot \vec{h}(Z)\, dZ,
\label{e1a}
\end{equation}
where 
$$\vec{h}(Z)=\int_{\partial\Omega}\int_{\Gamma(Q)}g(Q)\vec{\alpha}(X,Z)\beta(X,Q)\frac{\chi(2|X-Z|/\delta(X))}{\delta(X)^n}\,dX\,dQ,$$
and $\chi(s)=\chi_{(0,1)}(|s|)$.

Moreover, for any $G:\Omega\to\mathbb R$ with $\tilde{N}_{1}(\nabla G)\in L^q(\partial\Omega,d\sigma)$ we also have an upper bound
\begin{equation}
\int_{\Omega}\nabla G(Z)\cdot h(Z)\, dZ\lesssim \left\|\tilde{N}_{1}(\nabla G)\right\|_{L^q(\partial\Omega,d\sigma)}.
\label{e2}
\end{equation}
The implied constants in \eqref{e1a}-\eqref{e2} do not depend on $\varepsilon$, only on the dimension $n$.
\end{lemma}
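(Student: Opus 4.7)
The plan is a linearization-by-duality argument, adapted here to the averaged non-tangential maximal function $\tilde N_{1,\varepsilon}$. By the $L^q$--$L^{q'}$ duality,
\[
\|\tilde N_{1,\varepsilon}(\nabla F)\|_{L^q(\partial\Omega)} = \sup_{\|g\|_{L^{q'}}=1,\,g\ge 0}\ \int_{\partial\Omega}\tilde N_{1,\varepsilon}(\nabla F)(Q)\,g(Q)\,d\sigma(Q),
\]
so the first move is to fix a (near-)maximizing $g$; this is the $g$ appearing in the conclusion. The task then reduces to constructing $\vec\alpha$ and $\beta$ so that this boundary pairing is dominated by $\int_\Omega \nabla F\cdot \vec h\,dZ$, with $\vec h$ defined by the stated formula.

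Next I would linearize the two remaining operations in the definition of $\tilde N_{1,\varepsilon}(\nabla F)(Q)$: an inner $L^1$-average over $B(X,\delta(X)/2)$ and an outer supremum over $X\in\Gamma_\varepsilon(Q)$. For the inner average, select $\vec\alpha(X,Z)$ to be a unit $L^\infty$-vector field in $Z$ on $B(X,\delta(X)/2)$ realizing the $L^1$-norm, i.e.
\[
\vint_{B(X,\delta(X)/2)} |\nabla F(Z)|\,dZ = \vint_{B(X,\delta(X)/2)} \nabla F(Z)\cdot\vec\alpha(X,Z)\,dZ;
\]
the natural choice is $\vec\alpha(X,Z)=\nabla F(Z)/|\nabla F(Z)|$ where the denominator is non-zero. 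For the outer supremum, choose $\beta(X,Q)\,dX$ to be a probability density supported in $\Gamma_\varepsilon(Q)$ concentrated close to a near-maximizing $X^*(Q)$; since the supremum is attained up to arbitrarily small error, no constant is lost. The normalized volume average $\vint_{B(X,\delta(X)/2)}$ is then rewritten via the kernel $c_n\chi(2|X-Z|/\delta(X))/\delta(X)^n$ with $\chi=\chi_{(0,1)}$. Substituting these three linearizations into the boundary pairing and using Fubini to pull the $Z$-integration outside regroups the remaining $(X,Q)$-integrations into precisely the field $\vec h(Z)$ of the statement, establishing \eqref{e1a}.

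The upper bound \eqref{e2} follows by running the same representation in reverse. Fubini gives
\[
\int_\Omega \nabla G(Z)\cdot \vec h(Z)\,dZ = \int_{\partial\Omega} g(Q)\int_{\Gamma_\varepsilon(Q)}\beta(X,Q)\vint_{B(X,\delta(X)/2)}\nabla G(Z)\cdot\vec\alpha(X,Z)\,dZ\,dX\,d\sigma(Q).
\]
Since $|\vec\alpha|\le 1$, the innermost average is pointwise bounded by $\vint |\nabla G(Z)|\,dZ \le \tilde N_{1,\varepsilon}(\nabla G)(Q) \le \tilde N_1(\nabla G)(Q)$ whenever $X\in\Gamma_\varepsilon(Q)$; since $\beta(\cdot,Q)$ is a probability density on $\Gamma_\varepsilon(Q)$, the middle integral collapses to at most $\tilde N_1(\nabla G)(Q)$, and H\"older's inequality with $\|g\|_{L^{q'}}=1$ delivers the stated bound by $\|\tilde N_1(\nabla G)\|_{L^q}$.

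The step that will require some care is the joint measurability in $(X,Q,Z)$ of the constructed selectors $\vec\alpha$ and $\beta$, together with the legitimacy of the Fubini rearrangement; I would deal with these by a standard approximation, first proving the inequality for compactly supported smooth $F$ (so that $\vec\alpha$ can be chosen smooth via mollification of $\nabla F/|\nabla F|$) and then passing to the general case by density. The $\varepsilon$-independence of the implied constants is automatic: $\varepsilon$ enters only through the support condition $\supp \beta(\cdot,Q)\subset \Gamma_\varepsilon(Q)$, and no step of the construction depends on $\varepsilon$ quantitatively.
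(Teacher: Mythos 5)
Your proposal is correct and follows essentially the intended argument: the paper does not prove this lemma itself but imports it verbatim as Lemma 2.8 of \cite{KP2}, whose proof is exactly this duality/linearization scheme --- pick a near-extremal nonnegative $g$ by $L^{q}$--$L^{q'}$ duality, linearize the Whitney average with a unit vector field $\vec\alpha$ and the supremum over the doubly truncated cone with a probability density $\beta(\cdot,Q)$ supported in $\Gamma_\varepsilon(Q)$, rewrite the average via the kernel $\chi(2|X-Z|/\delta(X))\delta(X)^{-n}$, and apply Fubini to produce $\vec h$, with the converse bound \eqref{e2} obtained by running the same identity backwards using $|\vec\alpha|\le 1$, $\int\beta\,dX=1$, and H\"older. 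The technical points you flag (measurable selection of the near-maximizing point, justification of Fubini) are routine and are treated the same way in \cite{KP2}, and your observation that the constants are purely dimensional, hence independent of $\varepsilon$, matches the statement.
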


For the matrix $A = (a_{ij})$ as above, we let $v:\Omega\to\mathbb R$ be the solution of the inhomogenous Dirichlet problem for the operator ${\mathcal L}^*$ (adjoint to ${\mathcal L}$):
\begin{equation}
{\mathcal L}^*v=\div(A^*\nabla v)=\div(\vec{h})\mbox{ in }\Omega,\qquad v\Big|_{\partial\Omega}=0.\label{e3}
\end{equation}

Then Lemma 2.10 - Lemma 2.13 of \cite{KP2} gives us the following estimates for the nontangential maximal and square functions of $v$.

\begin{lemma}\label{l2} If the $L^{q'}$ Dirichlet problem is solvable for the operator ${\mathcal L}^*$, where $q>1$, 
 then there exists $C<\infty$ depending on $n$, $q$, and $\mathcal L^*$, such that for any $\vec{h}$ as in Lemma \ref{l1} and $v$ defined by \eqref{e3} we have
\begin{equation}
\|N(v)\|_{L^{q'}(\partial\Omega,d\sigma)}+\|\tilde{N}(\delta\nabla v)\|_{L^{q'}(\partial\Omega,d\sigma)}+\|S(v)\|_{L^{q'}(\partial\Omega,d\sigma)}\le C.
\label{e4}
\end{equation}
\end{lemma}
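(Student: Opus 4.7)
The three normalizations $\|g\|_{L^{q'}}=1$, $\|\vec\alpha(X,\cdot)\|_\infty=1$, and $\int_{\Gamma_\varepsilon(Q)}\beta(X,Q)\,dX=1$ from Lemma \ref{l1} mean that $\vec h$ should be thought of as a unit-norm object in a tent space built on $g$. My first step is to prove the pointwise estimate
\begin{equation}\label{keyh}
\delta(Z)\,|\vec h(Z)|\;\lesssim\;Mg(Q_Z),
\end{equation}
where $Q_Z\in\partial\Omega$ is a nearest boundary point to $Z$ and $M$ is the Hardy--Littlewood maximal operator on $\partial\Omega$. The support condition $\chi(2|X-Z|/\delta(X))\neq 0$ forces $\delta(X)\sim\delta(Z)$ and $|X-Z|\lesssim\delta(Z)$; combined with $X\in\Gamma(Q)$, this restricts $Q$ to a surface ball of radius $\sim\delta(Z)$ about $Q_Z$, and the $X$-integral is absorbed using $\int\beta(X,Q)\,dX\le 1$, leaving only an average of $g$ over that surface ball. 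Since $q'>1$, $M$ is bounded on $L^{q'}(\partial\Omega)$, so \eqref{keyh} already controls $\delta\vec h$ in a useful sense.

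For the nontangential bound I would use the Green's function representation
$$v(Y)\;=\;-\int_\Omega \vec h(Z)\cdot\nabla_Z G^*(Y,Z)\,dZ,$$
valid because $v$ is the weak solution of \eqref{e3} in $\dot W^{1,2}(\Omega)$. Partitioning $\Omega$ into dyadic Whitney annuli about $P=Q_Y$ and using the standard Caccioppoli-type estimate $|\nabla_Z G^*(Y,Z)|\lesssim\delta(Z)^{-1}G^*(Y,Z)$ together with \eqref{keyh}, one reduces to controlling
$$\int_\Omega G^*(Y,Z)\,\delta(Z)^{-2}\,Mg(Q_Z)\,dZ.$$
The $L^{q'}$ solvability of the Dirichlet problem for $\mathcal L^*$ is equivalent to $\omega^*\in A_\infty(\sigma)$, which by standard averaging-over-Whitney-boxes manipulations permits rewriting the integral as a Poisson-integral-type expression $\int_{\partial\Omega} Mg(Q)\,k^*_Y(Q)\,d\sigma(Q)$, with $k^*_Y$ the Poisson kernel for $\mathcal L^*$. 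Taking the nontangential supremum and invoking $(D)_{q'}$ for $\mathcal L^*$ applied to $Mg$ as boundary data gives $\|N(v)\|_{L^{q'}}\lesssim\|Mg\|_{L^{q'}}\lesssim 1$.

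The two remaining bounds follow quickly once $N(v)$ is controlled. Applying Caccioppoli on $B(X,\delta(X)/2)$ to the inhomogeneous equation $\mathcal L^* v=\mathrm{div}\,\vec h$ yields
$$\left(\fint_{B(X,\delta(X)/4)}|\nabla v|^2\right)^{1/2}\lesssim\delta(X)^{-1}\left(\fint_{B(X,\delta(X)/2)}v^2\right)^{1/2}+\left(\fint_{B(X,\delta(X)/2)}|\vec h|^2\right)^{1/2},$$
so multiplying by $\delta(X)$ and taking sup over $X\in\Gamma(P)$ gives $\tilde N(\delta\nabla v)(P)\lesssim N(v)(P)+Mg(P)$, which is in $L^{q'}$. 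For $S(v)$, I would test the equation against $v$ itself on truncated Carleson tents $T(\Delta_r)$, integrate by parts, and use $v|_{\partial\Omega}=0$ together with ellipticity to get $\iint_{T(\Delta_r)}|\nabla v|^2\,\delta\,dZ\lesssim\iint_{T(\Delta_r)}|\vec h|^2\,\delta\,dZ+\text{boundary error}$; combined with a good-$\lambda$ comparison of $S$ with $N$ in the spirit of Lemma \ref{l1a}, this yields $\|S(v)\|_{L^{q'}}\lesssim 1$.

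The main obstacle I expect is the rigorous control of $\nabla_Z G^*(Y,Z)$ for the non-symmetric operator $\mathcal L^*$ with only rough Carleson-type coefficient control: since classical layer-potential machinery is unavailable, the $A_\infty$ property of $\omega^*$ must be invoked at every step to convert Green's-function integrals into boundary integrals, and standard Harnack/Moser arguments must be used with care because $A$ is not symmetric. A secondary technical issue is maintaining $\varepsilon$-independent constants throughout the doubly truncated cone framework of Lemma \ref{l1}, which requires uniform estimates and a final passage to the limit $\varepsilon\to 0$.
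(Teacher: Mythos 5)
Your pointwise estimate $\delta(Z)\,|\vec h(Z)|\lesssim Mg(Q_Z)$ is correct, and the Caccioppoli step deducing $\tilde N(\delta\nabla v)\lesssim N(v)+Mg$ from it is fine. But the argument for $\|N(v)\|_{L^{q'}}$ (and likewise for $S(v)$) has a genuine gap: once you replace $\vec h$ by the majorant $\delta^{-1}Mg(Q_Z)$, you have discarded the one piece of structure that makes the lemma true, namely the normalization $\int_{\Gamma_\varepsilon(Q)}\beta(X,Q)\,dX=1$, which says that for each boundary point $Q$ the kernel distributes total mass $1$ over \emph{all} scales of the cone, not mass $\sim 1$ \emph{per} scale. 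Concretely, the quantity you reduce to,
\begin{equation*}
\int_\Omega G^*(Y,Z)\,\delta(Z)^{-2}\,Mg(Q_Z)\,dZ,
\end{equation*}
is not finite uniformly in $\varepsilon$: by CFMS comparison, each Whitney box $W$ contributes $\approx \omega^{*,Y}(\Delta_W)\cdot\inf_{\Delta_W}Mg$, and summing over boxes of a fixed scale already gives $\approx\int Mg\,d\omega^{*,Y}$, so summing over the $\sim\log(1/\varepsilon)$ scales present in the support of $\vec h$ produces a logarithmic divergence; no rewriting as a single Poisson integral $\int Mg\,k^*_Y\,d\sigma$ can repair this. A quick sanity check: in $\mathbb R^n_+$ with $\mathcal L=\Delta$, the field $\vec h=t^{-1}\chi_{\{\varepsilon<t<1/\varepsilon\}}e_n$ (locally admissible for your pointwise bound) gives $N(v)\sim\log(1/\varepsilon)$. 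The same loss occurs in your square-function step, since $\iint_{T(\Delta_r)}|\vec h|^2\,\delta\,dZ\lesssim\iint (Mg)^2\,\delta^{-1}dZ$ is again log-divergent. Any correct proof must keep the bilinear/tent-space structure of $\vec h$ — i.e., estimates of the type \eqref{e2} and the cone-integral bound $\tilde T(|\vec h|)\lesssim Mg$ — rather than only the pointwise bound. A secondary (fixable) issue is the pointwise inequality $|\nabla_Z G^*(Y,Z)|\lesssim\delta(Z)^{-1}G^*(Y,Z)$, which for rough coefficients should either be used in $L^2$-averaged (Caccioppoli) form or justified via the interior regularity coming from $\delta|\nabla A|\le C$.

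For comparison: the paper does not prove this lemma at all; it is quoted directly from Lemmas 2.10--2.13 of \cite{KP2}, where exactly the finer structure of $\vec h$ described above (Green's function representation combined with the mass normalization of $\beta$, doubling of $\omega^*$, and the functional $\tilde T(|\vec h|)$) is what yields \eqref{e4} with constants independent of $\varepsilon$. If you want a self-contained argument, you would essentially have to reproduce those estimates; the skeleton you propose (representation by $G^*$, CFMS, maximal function of $g$) is the right starting point, but the scale-summation must be carried out before majorizing $\vec h$ pointwise.
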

\vglue5mm

 Let $u$ be the solution of the following boundary value problem
\begin{equation}
{\mathcal L}u=\div(A\nabla u)=0\mbox{ in }\Omega,\qquad u\Big|_{\partial\Omega}=f,\label{e5}
\end{equation}
where we assume that $f\in \dot{W}^{1,q}(\partial\Omega)\cap \dot{B}^{2,2}_{1/2}(\partial\Omega)$ for some $q>1$. Then clearly, $u\in \dot{W}^{1,2}(\Omega)$ by Lax-Milgram. 
Fix $\varepsilon>0$. Our aim is to estimate $N_{1,\varepsilon}(\nabla u)$ in $L^q$ using Lemma \ref{l1}. Let $\vec{h}$ be as in Lemma \ref{l1} for $\nabla F=\nabla u$. Then since $\vec{h}\big|_{\partial\Omega}=0$ and $\vec{h}$ vanishes at $\infty$, we have by integration by parts
\begin{equation}\label{e6}
\|N_{1,\varepsilon}(\nabla u)\|_{L^q}\lesssim \int_{\Omega}\nabla u\cdot\vec{h}\,dZ=-\int_\Omega u\,\div\vec{h}\,dZ=-\int_{\Omega}u\,{\mathcal L}^*v\,dZ=-\int_{\Omega}u\,\div(A^*\nabla v)\,dZ.
\end{equation}
We now move $u$ inside divergence and then apply the divergence theorem to obtain:
\begin{equation}\nonumber
\mbox{RHS of \eqref{e6}}=-\int_{\Omega}\div(uA^*\nabla v)\,dZ+\int_\Omega A\nabla u\cdot\nabla v\,dZ=
\int_{\partial\Omega}u(\cdot,0)a^*_{nj}\partial_jv\,dx,
\end{equation}
since
$$\int_\Omega A\nabla u\cdot\nabla v\,dZ=-\int_\Omega\mathcal Lu\,v\,dZ=0.$$
Here there is no boundary integral as $v$ vanishes on the boundary of $\Omega$. It follows that
\begin{equation}\label{e8}
\|N_{1,\varepsilon}(\nabla u)\|_{L^q}\lesssim\int_{\partial\Omega}u(x,0)a^*_{nj}(x,0)\partial_jv(x,0)\,dx,
\end{equation}
where the implied constant in \eqref{e8} is independent of $\varepsilon>0$. Now, we use the fundamental theorem of calculus and the decay of $\nabla v$ at infinity to write \eqref{e8} as
\begin{equation}\label{e9}
\|N_{1,\varepsilon}(\nabla u)\|_{L^q}\lesssim-\int_{\partial\Omega}u(x,0)\left(\int_0^\infty \frac{d}{ds}\left(a^*_{nj}(x,s)\partial_jv(x,s)\right)ds\right)dx.
\end{equation}
Recall that $\div(A^*\nabla v)=\div(\vec{h})$ and hence RHS of \eqref{e9} equals to
\begin{equation}\label{e10}
=\int_{\partial\Omega}u(x,0)\left(\int_0^\infty \left[\sum_{i<n}\partial_i(a^*_{ij}(x,s)\partial_jv(x,s))-\div\vec{h}(x,s)\right]ds\right)dx.
\end{equation}
We integrate by parts moving $\partial_i$ for $i<n$ onto $u(\cdot,0)$. The integral term containing $\partial_nh_n(x,s)$ does not need to be considered as it equals to zero by the fundamental theorem of calculus since $\vec{h}(\cdot,0)=\vec{0}$ and
$\vec{h}(\cdot,s)\to\vec{0}$ as $s\to\infty$). 

It follows that
\begin{eqnarray}\nonumber
\|N_{1,\varepsilon}(\nabla u)\|_{L^q}&\lesssim& \int_{\partial\Omega} \nabla_\parallel f(x)\cdot\left(\int_0^\infty \left[\vec{h}_\parallel(x,s)-(A^*\nabla v)_\parallel(x,s)\right]ds\right)dx\\
&=&I+II.\label{e11}
\end{eqnarray}
Here $I$ is the term containing $\vec{h}_\parallel$ and $II$ contains $(A^*\nabla v)_\parallel$. The notation we are using here  is that, for a vector $\vec{w}=(w_1,w_2,\dots,w_n)$, the vector $\vec{w}_\parallel$ denotes the first $n-1$ components of $\vec{w}$, that is $(w_1,w_2,\dots,w_{n-1})$. 

As shall see below we do not need worry about term $I$. This is because what we are going to do next is essentially undo the integration by parts we have done above but we swap function $u$ with another better behaving function $\tilde{u}$ with the same boundary data. Doing this we eventually arrive to $\|\tilde N(\nabla\tilde{u})\|_{L^q}$ plus some error terms (solid integrals) that arise from the fact that $u$ and $\tilde{u}$ disagree inside the domain. This explain why we get the same boundary integral as $I$ but with opposite sign as this \lq\lq reverse process" will undo and eliminate all such boundary terms. 

We solve a new auxiliary PDE problem to define $\tilde{u}$. Let $\tilde{u}$ be the solution
of the following boundary value problem for the operator $\mathcal L_0$ defined in \eqref{e0}, i.e., its matrix $A_0$ has the block-form
$
A_0=\left[ \begin{array}{c|c}
   A_\parallel & 0 \\
   \midrule
   0 & 1 \\
\end{array}\right] 
$ and

\begin{equation}
{\mathcal L}_0 \tilde{u}=\div(A_0\nabla\tilde{u})=0\mbox{ in }\Omega,\qquad \tilde{u}\Big|_{\partial\Omega}=f.\label{e5a}
\end{equation}
Recall that we have assumed that the $L^q$ Regularity problem for the operator 
${\mathcal L_0}$ is solvable; that is, for a constant $C>0$ independent of $f$, 
$\|\tilde{N}(\nabla \tilde{u})\|_{L^q} \le C\|\nabla_\parallel f\|_{L^q}.$ Then, by Lemma \ref{l1a}, taking $r \rightarrow \infty$, we see that
\begin{equation}
\|\tilde{N}(\nabla \tilde{u})\|_{L^q}+\|S(\nabla\tilde{u})\|_{L^q}\le C\|\nabla_\parallel f\|_{L^q}.
\label{e12}
\end{equation}

We look the term $II$. 
Let
\begin{equation}
\vec{V}(x,t)=-\int_t^\infty (A^*\nabla v)_\parallel(x,s)ds.
\end{equation}
It follows that by the fundamental theorem of calculus
$$II=\int_{\partial\Omega}\nabla_\parallel u(x,0)\cdot \vec{V}(x,0)dx=\int_{\Omega}\partial^2_{tt}\left[\nabla_\parallel \tilde{u}(x,t)\cdot \vec{V}(x,t)\right]t\,dx\,dt.$$
Hence
\begin{eqnarray}
II&=&\int_{\Omega}\partial^2_{tt}(\nabla_\parallel \tilde{u})\cdot \vec{V}(x,t)t\,dx\,dt+\int_{\Omega}\partial_{t}(\nabla_\parallel \tilde{u})\cdot \partial_t(\vec{V}(x,t))t\,dx\,dt\nonumber+\\&&+\int_{\Omega}\nabla_\parallel \tilde{u}\cdot \partial^2_{tt}(\vec{V}(x,t))t\,dx\,dt
=II_1+II_2+II_3.
\end{eqnarray}
Here $\tilde{u}$ is same as in \eqref{e5a} (observe that $u$ and $\tilde u$ have the same boundary data).
Since $ \partial_t\vec{V}(x,t)=(A^*\nabla v)_\parallel$ the term $II_2$ is easiest to handle and can be estimated as a product of two square functions
\begin{equation}\label{125}
|II_2|\le \|S(\partial_t\tilde{u})\|_{L^q}\|S(v)\|_{L^{q'}}.
\end{equation}

By our assumption that the $L^{q'}$ Dirichlet problem for the operator ${\mathcal L}^*$ is solvable, Lemma \ref{l2} applies and
provides us with an estimate $\|S(v)\|_{L^{q'}}\le C$. Combining this estimate with \eqref{e12} yields
\begin{equation}
|II_2|\le C\|\nabla_\parallel f\|_{L^q},
\end{equation}
as desired.

Next we look at $II_1$. We integrate by parts moving $\nabla\parallel$ from $\tilde{u}$. This gives us
\begin{equation}\label{eq335}
II_1 =\int_{\Omega}\partial^2_{tt}\tilde{u}\cdot\left(\int_{t}^\infty\div_{\parallel}(A^*\nabla v)_{\parallel}ds\right)t\,dx\,dt.
\end{equation}
Using the PDE $v$ satisfies we get that
$$\int_{t}^\infty\div_{\parallel}(A^*\nabla v)_{\parallel}ds=(a_{nj}\partial_jv)(x,t)+\int_{t}^\infty\div\vec{h}\,ds.$$
Using this in \eqref{eq335} we see that
\begin{equation}\label{eq335a}
II_1 =\int_{\Omega}(\partial^2_{tt}\tilde{u})(a_{nj}\partial_jv)t\,dx\,dt+\int_{\Omega}\partial^2_{tt}\tilde{u}
\cdot\left(\int_{t}^\infty\div\vec{h}\,ds\right)t\,dx\,dt.
\end{equation}
Here the first term enjoys the same estimate as $II_2$, namely \eqref{125}. We work more with the second term which we call $II_{12}$. We integrate by parts in $\partial_t$. 
\begin{equation}\label{eq335b}
II_{12} =\int_{\Omega}(\partial_{t}\tilde{u})(
\div\vec{h})t\,dx\,dt-\int_{\Omega}\partial_{t}\tilde{u}
\cdot\left(\int_{t}^\infty\div\vec{h}\,ds\right)\,dx\,dt=
\end{equation}
\begin{equation}\nonumber
=II_{121}-\int_{\Omega}\partial_{t}\tilde{u}
\cdot\left(\int_{t}^\infty\div\vec{h}\,ds\right)\,dx\,dt=II_{121}+\int_{\partial\Omega}\tilde{u}(x,0)\left(\int_0^\infty \div\vec{h}\right)dx+\int_{\Omega}\nabla\tilde{u}\cdot\vec{h}\,dx\,dt
\end{equation}
\begin{equation}\nonumber
=II_{121}-\int_{\partial\Omega}\nabla_\parallel \tilde{u}(x,0)\left(\int_0^\infty \vec{h}_\parallel\right)dx+II_{123}=
II_{121}-I+II_{123}.\hskip4.3cm
\end{equation}
In the second line we have swapped $\partial_t$ and $\partial_{\parallel}$ derivatives  integrating by parts twice. 
This integration yields a boundary term but fortunately this term is precisely as the term $I$ defined by \eqref{e11} but since it comes with opposite sign these two terms cancel out.  We return to the terms $II_{121}$ and $II_{123}$ later.

Next we look at $II_3$. We see that
\begin{equation}
II_3= \int_{\Omega}\nabla_\parallel \tilde{u}\cdot \partial_t(A^*\nabla v)_\parallel t\,dx\,dt=
\int_{\Omega}\nabla_\parallel \tilde{u}\cdot ((\partial_t A)^*\nabla v)_\parallel t\,dx\,dt+\int_{\Omega}\nabla_\parallel \tilde{u}\cdot (A^*\nabla (\partial_t v))_\parallel t\,dx\,dt
\end{equation}
\begin{equation}
=II_{31}+II_{32}.\label{qq}
\end{equation}

In order to handle the term $II_{31}$ we will use the fact that the matrix A satisfies the Carleson measure condition \eqref{Carl2}.
The argument uses a stopping time argument that is typical in connection with Carleson measures.

To set this up, let $\mathcal O_j$ denote $\{x \in \partial\Omega:  N(\nabla\tilde{u})(Q)S(v)(Q) > 2^j\}$ and 
define an enlargement of $\mathcal O_j$ by $\tilde{\mathcal O}_j := \{ M(\chi_{\mathcal O_j}) > 1/2\}$. (Note that  $|\tilde{\mathcal O}_j | \lesssim |\mathcal O_j|$.)
We will break up integrals over $\Omega$ into regions determined by the sets:
$$F_j = \{X = (y,t) \in \Omega: |\Delta_{ct}(y) \cap \mathcal O_j| > 1/2, \,\,|\Delta_{ct}(y) \cap \mathcal O_{j+1}| \leq 1/2\},$$
where $c$ depends on the aperture of the cones used to define the nontangential maximal function and square functions.

Then, 

\begin{eqnarray}
|II_{31}|&\lesssim& \int_{\Omega} |\nabla\tilde{u}||\partial_t A||\nabla v|t dX \le \sum_j \int_{\Omega \cap F_j} |\nabla\tilde{u}||\partial_t A||\nabla v|t dX \nonumber\\
&\le& \sum_j \int_{\tilde{\mathcal O}_j \setminus \mathcal O_j} \int_{\Gamma(Q) \cap F_j} |\nabla\tilde{u}||\partial_t A||\nabla v|t^{2-n} dX dx\nonumber\\
&\le&\sum_j \int_{\tilde{\mathcal O}_j \setminus \mathcal O_j} \left(\int_{\Gamma(Q)}|\nabla v|^2 |\nabla \tilde{u}|^2 t^{2-n}dX\right)^{1/2}\left(\int_
{\Gamma(Q) \cap F_j}|\partial_tA|^2|^2t^{2-n}dX\right)^{1/2}dQ\nonumber
\end{eqnarray}
\begin{eqnarray}
&\le&\sum_j \int_{\tilde{\mathcal O}_j \setminus \mathcal O_j} N(\nabla\tilde{u})(Q) S(v)(Q) \left(\int_
{\Gamma(Q) \cap F_j}|\partial_tA|^2|^2t^{2-n}dX\right)^{1/2} \,dQ\nonumber\\
&\le&\sum_j 2^j \left(\int_{\tilde{\mathcal O}_j}  \int_
{\Gamma(Q) \cap F_j}|\partial_tA|^2|^2t^{2-n}dX\,dQ\right)^{1/2}  |\tilde{\mathcal O}_j|^{1/2}   \nonumber\\
&\lesssim&\sum_j 2^j  |\mathcal O_j| \,\,\lesssim\,\, \int_{\partial\Omega} N(\nabla\tilde{u})(Q) S(v)(Q) dQ.\label{Stopping}
\end{eqnarray}

 The penultimate inequality follows from the Carleson measure property of $|\partial_tA|^2| t dX$ as the integration is over the
 Carleson region $\{X=(y,t): \Delta_{ct}(y) \subset \tilde{\mathcal O}_j\}$.
 
 Consequently, by H\"older's inequality,
\begin{equation}
|II_{31}|\lesssim \|S(v)\|_{L^{q'}}\|N(\nabla\tilde{u})\|_{L^q}.
\end{equation}
Hence as above 
\begin{equation}
|II_{31}|\le C\|\nabla_\parallel f\|_{L^q}.
\end{equation}
For the term $ II_{32}$ we separate the parallel and tangential parts of the gradient, to get 
\begin{eqnarray}\nonumber
II_{32}&=&\int_{\Omega}\nabla_\parallel \tilde{u}\cdot (A_\parallel^*\nabla_\parallel (\partial_t v))t\,dx\,dt+
\int_{\Omega}\nabla_\parallel \tilde{u}\cdot (a_{in}^*\partial^2_{tt} v)_{i<n}t\,dx\,dt\\
&=&-\int_{\Omega}\div_\parallel(A_\parallel\nabla \tilde{u})(\partial_tv)tdx\,dt+II_{33}=\int_{\Omega}(\partial^2_{tt}\tilde{u})(\partial_tv)tdx\,dt+II_{33}.\nonumber
\end{eqnarray}
Here we have integrated the first term by parts and then used the equation that $\tilde{u}$ satisfies. It follows that in the last expression the first term has square functions bounds identical to \eqref{125}. For $II_{33}$
we write $\partial^2_{tt}v$ as
$$\partial^2_{tt}v=\partial_t\left(\frac{a^*_{nn}}{a^*_{nn}}\partial_t v\right)=\frac1{a^*_{nn}}\partial_t(a^*_{nn}\partial_tv)-\frac{\partial_t a^*_{nn}}{a^*_{nn}}\partial_t v$$
$$=-\frac1{a^*_{nn}}\left[\div_\parallel(a^*_\parallel\nabla_\parallel v)+\sum_{i<n}\left[\partial_i(a^*_{in}\partial_t v)+\partial_t(a^*_{ni}\partial_i v)\right]+\partial_t(a^*_{nn})\partial_t v -\div\vec{h}\right],$$
where the final line follows from the equation that $v$ satisfies. It therefore follows that the term $II_{33}$ can be written as a sum of five terms (which we shall call $II_{331},II_{332},\dots,II_{335}$.)

Terms $II_{331}$ and $II_{332}$ are similar and we deal with then via integration by parts (in $\partial_i$, $i<n$):
\begin{equation}\label{eq133}
|II_{331}|+|II_{332}|\le C\int_\Omega|\nabla^2 \tilde{u}||\nabla v|t+C\int_\Omega|\nabla A||\nabla\tilde{u}||\nabla v|t.
\end{equation}
For the third term $II_{333}$ we observe that $\partial_t(a^*_{ni}\partial_i v)=\partial_i(a^*_{ni}\partial_t v)+(\partial_ta^*_{ni})\partial_iv-(\partial_ia^*_{ni})\partial_tv$ which implies that it again can be estimated by the right-hand side of \eqref{eq133}. 
The same is true for the term $II_{334}$ which has a bound by the second term on the right-hand side of \eqref{eq133}. It remains to consider the term  $II_{335}$ which is
\begin{equation}\label{eq134}
II_{335}=\sum_{i<n}\int_{\Omega}\frac{a_{ni}}{a_{nn}}\partial_i\tilde{u}\,(\div\vec{h})\,t\,dx\,dt.
\end{equation}
Notice the similarity of this term with $II_{121}$, hence the calculation below also applies to it.
We again integrate by parts. Observe we get an extra term when $\partial_t$ derivative falls on $t$. This gives us
\begin{equation}\label{eq135}
|II_{121}|+|II_{335}|\le C\int_\Omega|\nabla^2 \tilde{u}||\vec{h}|t+\int_\Omega|\nabla A||\nabla\tilde{u}||\vec{h}|t+\sum_{i}\left|\int_\Omega \frac{a_{ni}}{a_{nn}}\partial_i\tilde{u}\,h_n\,\,dx\,dt\right|.
\end{equation}

We deal with terms on the right-hand side of \eqref{eq133} and \eqref{eq135} now. The first term of \eqref{eq133}  can be seen to be a product of two square functions and hence by H\"older it has an estimate by $\|S(\nabla\tilde{u})\|_{L^q}\|S(v)\|_{L^{q'}}$. The second term of \eqref{eq133} is similar to the term $II_{31}$ with analogous estimate. It follows that
\begin{eqnarray}\nonumber
|II_{331}|+|II_{332}|+|II_{333}|+|II_{334}|&\le& C(\|S(\nabla\tilde{u})\|_{L^q}+\|N(\nabla\tilde{u})\|_{L^q})\,\|S(v)\|_{L^{q'}}\\\label{eq136}
&\le& C\|\nabla_\parallel f\|_{L^q},
\end{eqnarray}
by using \eqref{e12} and Lemma \ref{l2}. The first two terms of \eqref{eq135} have similar estimates, provided we introduce as in \cite{KP2} the operator $\tilde{T}$. Here
$$\tilde{T}(|\vec{h}|)(Q)=\int_{\Gamma(Q)}|\vec{h}|(Z)\delta(Z)^{1-n}(Z)dZ.$$
The last term of \eqref{eq135} and also the term $II_{123}$ is handled using \eqref{e2}. Here the presence of $\frac{a_{ni}}{a_{nn}}$ in the integral is harmless as we have flexibility to hide this term into the vector-valued function $\vec\alpha$ in the definition of $\vec{h}$. This gives us
\begin{eqnarray}\nonumber
|II_{121}|+|II_{123}|+|II_{335}|&\le& C(\|S(\nabla\tilde{u})\|_{L^q}+\|N(\nabla\tilde{u})\|_{L^q})\,\|\tilde{T}(|\vec{h}|)\|_{L^{q'}}+C\|\tilde{N}_1(\nabla\tilde{u})\|_{L^q}\\\label{eq136x}
&\le& C\|\nabla_\parallel f\|_{L^q}.
\end{eqnarray}
Here the bound for $\|\tilde{T}(|\vec{h}|)\|_{L^{q'}}$ follows from Lemma 2.13 of \cite{KP2}.
\vglue2mm

In summary, under the assumptions we have made we see that
$$II=\int_{\partial\Omega}\nabla_\parallel u(x,0)\cdot \vec{V}(x,0)dx\le C\|\nabla_\parallel f\|_{L^q}-I.$$

After putting all estimates together (since term $I$ cancels out), we have established the following:
$$ \|\tilde{N}_{1,\varepsilon}(\nabla u)\|_{L^q}\le C\|\nabla_\parallel f\|_{L^q}.$$

\medskip

\noindent {\bf Remark:} The assumption that $L^p$ Regularity problem for the block form operator ${\mathcal L}_\parallel$ is solvable for some $p>1$ implies solvability of the said Regularity problem for all values of $p\in (1,\infty)$. This follows by combining results of \cite{DK} and \cite{DPP}.\vglue2mm

An argument is required to demonstrate that the control of $\tilde{N}_{1,\varepsilon}(\nabla u)$ of a solution $\mathcal Lu=0$ implies the control of $\tilde{N}(\nabla u)$ (the $L^2$ averaged version of the non-tangential maximal function). Firstly, as the established estimates are independent of $\varepsilon>0$ we obtain
$$\|\tilde{N_1}(\nabla u)\|_{L^q}=\lim_{\varepsilon\to 0+}\|\tilde{N}_{1,\varepsilon}(\nabla u)\|_{L^q}\le C\|\nabla_\parallel f\|_{L^q}.$$
Secondly, as $\nabla u$ satisfies a reverse H\"older self-improvement inequality
$$\left(\vint_B|\nabla u|^{2+\delta}\right)^{1/(2+\delta)}\lesssim \left(\vint_{2B}|\nabla u|^{2}\right)^{1/2},$$
for some $\delta>0$ depending on ellipticity constant and all $B$ such that $3B\subset\Omega$, it also follows (c.f. \cite[Theorem 2.4]{S}) that
$$\left(\vint_B|\nabla u|^2\right)^{1/2}\lesssim \left(\vint_{2B}|\nabla u|\right),$$
which implies a bound of $\tilde{N}(\nabla u)(\cdot)$ defined using cones $\Gamma_a(\cdot)$ of some aperture $a>0$ by $\tilde{N}_1(\nabla u)(\cdot)$ defined using cones $\Gamma_b(\cdot)$ of some slightly larger aperture $b>a$. Hence $ \|\tilde{N}(\nabla u)\|_{L^q}\le C\|\nabla_\parallel f\|_{L^q}$ must hold.

This completes the proof of Theorem \ref{t1}. \qed

\section{The Regularity and Neumann problems when $n=2$}

To prove Theorem \ref{RNduality} in the special case $n=2$, we will use Theorem \ref{t1}, a change of variable discovered by Feneuil \cite{F}, and the equivalence in dimension two between the solvability of Regularity and Neumann problems observed by Kenig and Rule \cite{KR}. 

\bigskip

\noindent{\it Proof of Theorem \ref{RNduality}.} The solvability of the Neumann problem
can be reduced to solvability of the Regularity problem using an observation in \cite{KR}; namely, if $u$
solves $\mathcal Lu=\div(A\nabla u)=0$ in $\Omega$ then $\tilde{u}$ uniquely (modulo constants) defined via
\begin{equation}
\begin{bmatrix}
0 & -1\\ 1 & 0
\end{bmatrix}\nabla{\tilde{u}}=A\nabla u
\end{equation}
solves the equation $\tilde{\mathcal L}u=\div(\tilde{A}\nabla u)=0$ with $\tilde{A}=A^t/\det{A}$ and the tangential derivative of $u$ is the co-normal derivative of $\tilde{u}$ and vice-versa. 

If $A$ satisfies the Carleson condition \eqref{CC} then so does $A^t/\det{A}$ (with a possibly larger constant) and hence the $L^p$ Neumann problem for a given matrix $A$ is solvable in the same range $1<p<p_{max}$ for which the $L^p$ Regularity problem for the matrix $A^t/\det{A}$ is solvable. The range of solvability for the operator with matrix $A^t/\det{A}$ is determined by the range of solvability of 
the Dirichlet problem for its adjoint operator, which has matrix $A/\det{A}$, reducing the second claim of Theorem \ref{RNduality} to the first.
In summary, for these operators, the solvability of the Neumann problem can be deduced from solvability of the Regularity problem.\vglue2mm

Next, we perform some additional reductions to simplify the problem. A well known and used pull-back transformation
\begin{equation}\label{CHV}
(x,t)\mapsto (x,ct+(\eta_t*\phi)(x))
\end{equation}
for a smooth family of mollifiers $(\eta_t)_{t>0}$ and some sufficiently large enough $c>0$ (depending on $\|\nabla\phi\|_{L^\infty}$) allows us to consider the Regularity problem on the domain ${\mathbb R}^n_+$. This is because, for $\Omega=\{(x,t):\, t>\phi(x)\}$, the pull-back map preserves the ellipticity condition and the Carleson condition on the coefficients (although the Carleson bound $K$ for the new operator on ${\mathbb R}^n_+$ might increase and will depend on $\|\nabla\phi\|_{L^\infty}$ as well).

Hence from now on we assume that $\Omega={\mathbb R}^n_+$. The next reduction comes in the form of replacing the Carleson condition \eqref{CC} by the stronger condition: 
\begin{equation}\label{CCalt}
\delta(X)\left[\sup_{Y\in B(X,\delta(X)/2)}|\nabla A(Y)|\right]^2\mbox{ is a Carleson measure}.
\end{equation}
To see this, one consider a new matrix $\bar{A}$ obtained from $A$ via mollification
$\bar{A}(x,t)=(A*\eta_{t/2})(x,t)$ (for details see \cite{DPR} where this observation was made). The matrix valued function $\bar{A}$ is uniformly elliptic but now satisfies \eqref{CCalt} instead of the oscillation condition, \eqref{CC}, that holds for $A$. In addition, we also have
\begin{equation}\label{CCpert}
\delta(X)^{-1}\left[\sup_{Y\in B(X,\delta(X)/2)}|A(Y)-\bar{A}(Y)|\right]^2\mbox{ is a Carleson measure.}
\end{equation}

Let us clarify our objective. It suffices to prove that the $L^p$ Regularity problem for the original operator $\mathcal L=\div(A\nabla\cdot)$ is solvable for at least one value $q\in(1,\infty)$ as then by \cite{DK} it follows that $L^p$ Regularity problem for $\mathcal L$ is solvable if and only if the $L^{p'}$ Dirichlet problem is solvable for the adjoint operator $\mathcal L^*$. (See Theorem 1.1 of \cite{DK}) But solvability of Dirichlet problem satisfying Carleson condition in the range $(p_{dir},\infty)$ for some $p_{dir}>1$ has been resolved in \cite{KP}, and hence the claim about the range of solvability of Regularity stated in Theorem  \ref{RNduality} would follow.

The operator with matrix $A$ is, by \eqref{CCpert}, a Carleson perturbation of the operator $\bar{A}$. By the perturbation 
theory of \cite{KP2}, the solvability of the $L^q$ Regularity problem for at least one $q\in (1,\infty)$ for the operator $\div(\bar{A}\nabla\cdot)$ 
implies solvability of the Regularity problem for the operator 
$\mathcal L=\div(A\nabla\cdot)$ for a possibly different (smaller) value of $\tilde{q}>1$. \vglue2mm

Hence it remains to establish the solvability of the  $L^q$ Regularity problem for a uniformly elliptic operator satisfying condition \eqref{CCalt} in the domain $\Omega=\mathbb R^n_+$ for at least one value of $q>1$. Up to this point, all the statements and reductions regarding 
the Regularity problem are valid in any dimension. In what follows, we will use the assumption that we are in two dimensions. 

In the two dimensional setting, we observe that if our matrix $A$ has the special form in which the $a_{11}$ coefficient equals $1$ in $\Omega$, then the operator $\mathcal L_0$ in Theorem \ref{t1} is simply the Laplacian $\Delta$. For the Laplacian, all the required square and non-tangential estimates are known, including solvability of the Regularity problem for all values of $p>1$. Applying Theorem \ref{t1} gives the solvability of the $L^q$ Regularity problem for at least one value $q>1$ for an operator $\mathcal L=\div(A\nabla\cdot)$ with $a_{11}=1$, which is our objective.
Hence the objective now is to reduce from our general matrix $A$ to one having this special form with $1$ in the top left corner.
\vglue2mm

The strategy of using a change of variables to reduce to matrices of a special form has been used before in two dimensions
to prove solvability of Dirichlet and Regularity boundary value problems. The paper \cite{KKPT2} considered the $L^p$ Dirichlet problem for operators whose matrix
$A(x,t) = A(x)$ is $t$-independent and non-symmetric. The crucial observation that was used to
resolve the obstacles in solving this problem for some, possibly large, value of $p$ was the 
discovery of a change of variables reducing matters to matrices of the form $\begin{bmatrix} 1 &\gamma(x)\\0& \delta(x)\end{bmatrix}$. For these
matrices, the proof of solvability used arguments that took advantage of the upper triangular structure. This particular
change of variable does not apply to the situation in Theorem \ref{RNduality} as it relied heavily on the $t$-independence.

We will be able to make the reduction to a matrix with $1$ in the top left corner via a very useful change 
of variables introduced by J. Feneuil in \cite{F}. The change of variable can be stated in $n$-dimensions, and
has strong consequences when specialized two dimensions. In ${\mathbb R}^n_+$, let:
$$\rho:(x,t)\mapsto (x,th(x,t)),$$
where $1 < h < 2$, and which, for $2t|\nabla h|<h$, is a bijection on ${\mathbb R}^n_+$.
Let $J_\rho$ denote the Jacobian of this change of variables:

If $u$ is the solution to an operator $\mathcal L=\div(A\nabla\cdot)$, then, as observed in \cite{F}, then $u \circ \rho$ is the solution
 a new operator $\mathcal L_\rho$ with matrix $A_\rho = \text{det\,}(J_\rho) (J_\rho)^{-t} (A \circ \rho) (J_\rho)^{-1}$.
 A simple calculation gives that  $J_\rho =  \left[ \begin{array}{c|c}
   I  & t\nabla_x h \\
   \midrule
   0 & h + t\partial_t h \\
\end{array}\right].$
 \vglue2mm
 
As observed in \cite{F}, the matrix $A_\rho$ can be written in the form

\begin{equation}
A_\rho=\left[ \begin{array}{c|c}
   hA_\parallel & B \\
   \midrule
   C & h^{-1}d \\
\end{array}\right]+{\mathcal B}_\rho,\quad
\end{equation}
where

\begin{equation}
A=\left[ \begin{array}{c|c}
   A_\parallel & B \\
   \midrule
   C & d \\
\end{array}\right],\mbox{ and $A_\parallel$ is the $(n-1)\times (n-1)$ block.}
\end{equation}
The matrix ${\mathcal B}_\rho$ is a Carleson perturbation matrix, that is
\begin{equation}\label{CCpert2}
\delta(X)^{-1}\left[\sup_{Y\in B(X,\delta(X)/2)}|{\mathcal B}_\rho(Y)|\right]^2\mbox{ is a Carleson measure.}
\end{equation}
 \vglue2mm
 
In particular, the result of \cite{KP2} tell us that when \eqref{CCpert2}  holds, the solvability of Regularity problem for the operator with matrix 
$\left[ \begin{array}{c|c}
   hA_\parallel & B \\
   \midrule
   C & h^{-1}d \\
\end{array}\right]$ for some $q>1$ implies solvability of Regularity problem for the operator with matrix $A_\rho$ for possibly different value of $\tilde q>1$. As $A_\rho$ and $A$ are related via the change of variables this also implies solvability of the original Regularity problem for operator with matrix $A$ for the same $\tilde{q}>1$.
 \vglue2mm
 
Consider the case $n=2$. Ideally, we would want to choose $h=a_{11}^{-1}$ so that the matrix 
$\left[ \begin{array}{c|c}
   hA_\parallel & B \\
   \midrule
   C & h^{-1}d \\
\end{array}\right]$ has a $1$ in the top left corner. It is only possible to make this choice when $2t|\nabla h|<h$. If that is not the case, 
we use the clever method of \cite{F} to achieve this objective after a finite sequence of steps instead of just one. Observe that since $a_{11}$ satisfies the Carleson condition \eqref{CCalt} and $\lambda\le a_{11}\le\Lambda$ there exists an integer $N$ such that, for $h=a_{11}^{-1/N}$, we have $2t|\nabla h|<h$ as well as the property that $1/2\le h\le 2$.
 \vglue2mm
As in \cite{F}, this can be iterated $N$ times. After one iteration, we have that the solvability of Regularity problem for an operator with matrix $A$ for some $q>1$ can be deduced from solvability of Regularity problem for an operator with matrix $\begin{bmatrix} ha_{11} & a_{12}\\ a_{22} & h^{-1}a_{22}\end{bmatrix}$. In the next iteration we relate it to the solvability of the Regularity problem for the operator with matrix $\begin{bmatrix} h^2a_{11} & a_{12}\\ a_{22} & h^{-2}a_{22}\end{bmatrix}$. Finally, after the $N$ iterations we find that we need to consider solvability of the Regularity problem for the operator with matrix $\begin{bmatrix} h^Na_{11} & a_{12}\\ a_{22} & h^{-N}a_{22}\end{bmatrix}=\begin{bmatrix} 1 & a_{12}\\ a_{22} & a_{11}a_{22}\end{bmatrix}$.  As observed above for such matrices, Theorem \ref{t1} gives solvability of the Regularity problem for some $q>1$.
 \vglue2mm

It is important to emphasise that Feneuil's change of variables  gives \eqref{CCpert2} only if coefficients of the original matrix $A$ satisfy \eqref{CCalt}, but we
have reduced matters to this situation. 
 This finishes the proof of Theorem \ref{RNduality}.
 \qed
\section{Regularity problem for block form operators when $n\ge 2$.}

In this section we establish Theorem \ref{RNduality} in all dimensions for the Regularity problem. The argument is not as simple as in the case $n=2$, where we made use of Feneuil's change of variables. Instead, the ideas necessary for the $n$-dimensional result are closer to the methods of \cite{DPR}.

Using the same reductions established in Section 4 - the flattening of the domain and mollification of coefficients - we see that Theorem \ref{RNduality} holds provided we can solve the $L^q$ Regularity problem in $\mathbb R^n_+$ for the block form operator \eqref{e0a} under the Carleson condition \eqref{CCalt} for at least one (and hence for all) $1<q<\infty$. This is precisely the claim of Theorem \ref{tblock}, and we turn to its proof.

Consider therefore $A_\parallel$ as in Theorem \ref{tblock}  and denote by $\mathcal L_0$ the operator
\begin{equation}\label{e0ax}
{\mathcal L}_0 u= \mbox{\rm div}_\parallel(A_\parallel \nabla_\parallel u)+u_{tt}.
\end{equation}

For each $k=2,3,4,\dots$ let $\mathcal L_k$ be a related rescaled operator in $t$-variable defined as follows:
\begin{equation}\label{e0axs}
{\mathcal L}_k u= \mbox{\rm div}_\parallel(A^k_\parallel \nabla_\parallel u)+u_{tt},
\end{equation}
where 
\begin{equation}\label{ss1}
A^k_\parallel(x,t)=A_\parallel(x,kt),\qquad\mbox{for all }x\in\mathbb R^{n-1}\mbox{ and }t>0. 
\end{equation}

We claim that for each $k=2,3,\dots$ the $L^q$ Regularity problem for $\mathcal L_0$ in $\mathbb R^n_+$  is solvable if and only if the $L^q$ Regularity problem for $\mathcal L_k$ in $\mathbb R^n_+$  is solvable.

This can be see as follows. Using the mean value theorem the coefficients $A^k_\parallel$ can be viewed as Carleson perturbations of coefficients of $\mathcal L_0$ which are $A_\parallel$. That is, similar to \eqref{CCpert}, we have that
\begin{equation}\label{CCpertX}
\delta(X)^{-1}\left[\sup_{Y\in B(X,\delta(X)/2)}|A_\parallel(Y)-{A^k_\parallel}(Y)|\right]^2\mbox{ is a Carleson measure.}
\end{equation}
Thus, if the $L^q$ Regularity problem for $\mathcal L_0$ in $\mathbb R^n_+$ is solvable, then so is the 
$L^{\tilde{q}}$ Regularity problem for $\mathcal L_k$ in $\mathbb R^n_+$ for some $\tilde{q}>1$ (by \cite{KP2}).
But for these block form operators, solvability of the Regularity problem for one value $\tilde{q}>1$ implies solvability for all values. Therefore we can deduce that the $L^q$ Regularity problem for $\mathcal L_k$ in $\mathbb R^n_+$ is solvable. The reverse implication has a similar proof. 

Next, we consider what we can say about the Carleson condition for the coefficients $A^k_\parallel$. We want to look at 
$$d\mu^k(x,t)=|\nabla_x A^k_\parallel (x,t)|^2t\,dx\,dt.$$
Notice that the gradient is only taken in $x$ variable, not in $t$, so we are not examining the same (full) Carleson measure property of the coefficients.
Given that \eqref{Carl2m} holds,  it follows that for 
$$d\mu^0(x,t)=|\nabla_x A_\parallel (x,t)|^2t\,dx\,dt,$$
we have that
\begin{equation}\label{ss2}
\|\mu^0\|_{Carl}\le \|\mu\|_{Carl}\qquad\mbox{and }\qquad |\nabla_x A_\parallel (x,t)|\le \frac{\|\mu\|^{1/2}_{Carl}}t.
\end{equation}
Let $\Delta\subset\mathbb R^{n-1}$ be a boundary ball of radius $r$. Let $T(\Delta)$ be the usual Carleson region associated with $\Delta$.

 To estimate the Carleson norm of $\mu^k$ in the region $T(\Delta)\cap \{X:\delta(X)<r/k\}$, a change of variables $(x,t)\mapsto (x,kt)$  together with the first
  the Carleson norm property in
  \eqref{ss2} gives an upper bound of $1/k^2$.
 In the region $T(\Delta)\cap \{X:\delta(X)\ge r/k\}$, 
we use the second estimate in \eqref{ss2} and altogether this gives:
\begin{equation}\label{ss3}
\|\mu^k\|_{Carl}\le \|\mu\|_{Carl}\frac{1+C(n)\log k}{k^2},\qquad\mbox{for some }C(n)>0.
\end{equation}
It follows that by choosing $k$ large enough we can make the Carleson norm of $\mu^k$ as small as we wish.
This observation will be crucial for what follows.

From now on let $B_\parallel=A^k_\parallel$ for some large fixed $k$ which will be determined later. Let 
\begin{equation}\label{e0axx}
{\mathcal L} u= \mbox{\rm div}_\parallel(B_\parallel \nabla_\parallel u)+u_{tt},
\end{equation}
and we consider the Regularity problem for this operator on $\Omega=\mathbb R^n_+$. Our objective now is to solve the $L^q$ Regularity problem for $\mathcal L$ for some $q>1$, thus proving  Theorem \ref{tblock}.

Suppose that $\mathcal Lu=0$ and that $u\big|_{\partial\Omega}=f$ for some $f$ with $\nabla_x f\in L^q$.

In the spirit of the approach taken in \cite{DPR} we consider the PDEs satisfied by each $w_m=\partial_m u$ for $m=1,2,
\dots, n-1$ satisfies. Due to the block form nature of our operator $\mathcal L$ we have the following:
\begin{eqnarray}\label{system}
{\mathcal L}w_m&=&\sum_{i,j=1}^{n-1}\partial_i((\partial_m b_{ij})w_j)\quad\mbox{in }\Omega,\quad m=1,2,\dots,n-1,\\
w_m\Big|_{\partial\Omega}&=&\partial_m f.\nonumber
\end{eqnarray}
Observe that only $w_1,\dots, w_{n-1}$ appears in these equations and hence \eqref{system} is a {\it weakly coupled} fully determined system of $n-1$ equations for the unknown vector valued function $W=(w_1,w_2\dots,w_{n-1})$
with boundary datum $W\big|_{\partial\Omega}=\nabla_xf\in L^p$. We call this system {\it weakly coupled} because each $\partial_mb_{ij}$ appearing on the righthand side has small Carleson measure norm, which follows from \eqref{ss3} since $k$ will be chosen to be (sufficiently) large. 

Hence, let us write $w_m=v_m+\eta_m$ where each $v_m$ solves the Dirichlet problem 
$$\mathcal Lv_m=0\mbox{ in }\Omega,\qquad v_m\big|_{\partial\Omega}=\partial_mf\in L^q(\partial\Omega).$$
As $\mathcal L$ is a block form matrix we know this Dirichlet problem is solvable for all $1<q<\infty$ and we have the following square and nontangential estimates:
\begin{equation}\label{sss1}
\|S(v_m)\|_{L^q}\approx\|N(v_m)\|_{L^q}\lesssim \|\partial_m f\|_{L^q},\qquad m=1,2,\dots,n-1.
\end{equation}
Thus each $\eta_m$ solves
\begin{eqnarray}\label{system2}
{\mathcal L}\eta_m&=&\sum_{i,j=1}^{n-1}\partial_i((\partial_m b_{ij})(v_j+\eta_j))\quad\mbox{in }\Omega,\quad m=1,2,\dots,n-1,\\
\eta_m\Big|_{\partial\Omega}&=&0.\nonumber
\end{eqnarray}
Our aim is to establish square and nontangential estimates for each $\eta_m$ as well, and thus also for $w_m$.

Let us start with the square function bound. The most convenient bound we can get is when $q=2$ and hence from now on we shall assume that. Using ellipticity we see that
\begin{eqnarray}\label{system3}
\|S(\eta_m)\|_{L^2}^2&\approx&\int_{\mathbb R^n_+}\left(\sum_{i,j=1}^{n-1}b_{ij}\partial_j\eta_m\partial_i\eta_m+(\partial_t\eta_m)^2\right)t\,dx\,dt=\\\nonumber
&=&-\int_{\mathbb R^n_+}(\mathcal L\eta_m)\eta_mt\,dx\,dt+\frac12\int_{\mathbb R^{n}_+}\partial_t(\eta_m)^2\,dx\,dt,
\end{eqnarray}
where in the second line we have integrated by parts. There is no boundary integral due to the fact that $t=0$ at the boundary. The 
second summand following the equality in \eqref{system3} vanishes since $\eta_m=0$ at the boundary and $\eta_m\to 0$ as $t\to\infty$ (due to the decay of our solutions at infinity). Hence, only the penultimate term of \eqref{system3} remains, where we will substitute \eqref{system2} and sum over $m$.
\begin{eqnarray}\nonumber
\sum_{m<n}\|S(\eta_m)\|_{L^2}^2&\approx&-\int_{\mathbb R^n_+}\sum_{i,j,m<n}\partial_i((\partial_m b_{ij})(v_j+\eta_j))\eta_m t \,dx\,dt\\\label{system4}
&=&\int_{\mathbb R^n_+}\sum_{i,j,m<n}(\partial_m b_{ij})(v_j+\eta_j)\partial_i\eta_m t \,dx\,dt\\\nonumber
&\lesssim& \left(\sum_{m<n}\|S(\eta_m)\|_{L^2}^2\right)^{1/2}\left(\int_{\mathbb R^n_+}|\nabla_x B_{\parallel}|^2(|V|^2+|\vec\eta|^2)t\,dx\,dt\right)^{1/2},
\end{eqnarray}
by Cauchy-Schwarz. Hence $V=(v_1,v_2,\dots,v_{n-1})$ and $\vec\eta=(\eta_1,\eta_2,\dots,\eta_{n-1})$. For the last term of expression above we use the Carleson property and also move the square function term on the righthand side. This will give us:
\begin{eqnarray}\label{system5}
\sum_{m<n}\|S(\eta_m)\|_{L^2}^2&\lesssim&\||\nabla_xB_\parallel|^2t\,dx\,dt\|_{Carl}\left(\sum_{m<n}
\|N(v_m)\|_{L^2}^2+\|N(\eta_m)\|_{L^2}^2\right).
\end{eqnarray}
But for nontangential maximal function of $v_m$ we do have \eqref{sss1} and hence we can conclude that
\begin{eqnarray}\label{system6}
\sum_{m<n}\|S(\eta_m)\|_{L^2}^2&\le& C(k)\left(\|\nabla_xf\|^2_{L^2}+\sum_{m<n}
\|N(\eta_m)\|_{L^2}^2\right).
\end{eqnarray}
Here $C(k)\to 0$ as $k\to\infty$ thanks to the choice of matrix $B_{\parallel}$ made above.\vglue1mm

It remain to establish a nontangetial estimates of $N(\eta_m)$ since we would like to move such terms from the righhand side of \eqref{system6}.

Here we refer the reader to the paper \cite{Dsystems} where a classical stopping time technique has been used for similar estimates in the case of systems.
(The idea of estimating an integral of a nontangential maximal via good-$\lambda$ inequalities and a Lipschitz graph determined by the stopping time goes back to
\cite{KKPT2}. New ideas were needed to make this approach work in the case of systems.)
In particular, Lemma 5.1, Lemma 5.2 and Corollary 5.3 of \cite{Dsystems} hold without any modifications for the system $\vec\eta$ considered here.

What does change in the present context is Lemma 5.4 of \cite{Dsystems}, which we reformulate as follows.

\begin{lemma}\label{S3:L8-alt1} 
Let $\Omega={\mathbb R}^n_+$ and let ${\mathcal L}$ be a block-from operator as above.
Suppose $\vec\eta$ is a weak solution of \eqref{system2} in $\Omega$.  For a fixed (sufficiently large) $a>0$, consider an arbitrary Lipschitz function $\hbar:{\mathbb R}^{n-1}\to \mathbb R$ such that
\begin{equation}
\|\nabla \hbar\|_{L^\infty}\le 1/a,\qquad \hbar(x)\ge 0\text{ for all }x\in{\mathbb R}^{n-1}.\label{hbarprop}
\end{equation}
Then for sufficiently large $b=b(a)>0$ we have the following. For an arbitrary surface ball $\Delta_r\subset{\mathbb R}^{n-1}$ of radius $r$ such that at least one point of $\Delta_r$
the inequality $\hbar(x)\le 2r$ holds we have the following estimate for all $m=1,2,\dots,n-1$ and an arbitrary $\vec{c}=(c_1,c_2,\dots,c_{n-1})\in\mathbb R$:
\begin{align}\label{TTBBMM}
&\sum_{m<n}\int_{1/6}^6\int_{\Delta_r}\big|\eta_m\big(x,\theta\hbar(x)\big)-c_m\big|^2\,dx\,d\theta
\leq C\|S_b(\vec\eta)\|_{L^2(\Delta_{2r})}
\|{N}_a(\vec\eta-\vec{c})\|_{L^2(\Delta_{2r})}
\nonumber\\+&C(k)(\|{N}_a(\vec\eta-\vec{c})\|_{L^2(\Delta_{2r})}^2+\|{N}_a(\vec\eta)\|_{L^2(\Delta_{2r})}^2+\|N_a(V)\|^2_{L^2(\Delta_{2r})})\nonumber\\
+&C\|S_b(\vec\eta)\|^2_{L^2(\Delta_{2r})}+\frac{C}{r}\iint_{\mathcal{K}}|\vec\eta-\vec{c}|^{2}\,dX,
\end{align}
for some $C\in(0,\infty)$ that only depends on $a,\Lambda,n$ but not on $\vec\eta$, $\vec{c}$ or $\Delta_r$
and $C(k)>0$ depends only on $k$ chosen to define $B_\parallel$, the Carleson norm $\|\mu\|_{Carl}$ and has the property that $C(k)\to 0$ as $k\to\infty$.

 Here $\mathcal{K}$ denotes a region inside $\Omega$ such that its diameter, 
distance to the graph $(\cdot,\hbar(\cdot))$, and distance to $\Delta_r$, are all comparable to $r$. 
Also, the cones used to define the square and nontangential 
maximal functions in this lemma have vertices on $\partial\Omega$.

Moreover, the term $\frac{C}{r}\displaystyle\iint_{\mathcal{K}}|\vec\eta-\vec{c}|^2\,dX$ appearing 
in \eqref{TTBBMM} may be replaced by the quantity
\begin{equation}\label{Eqqq-25}
Cr^{n-1}\left|(\vec\eta-\vec{c})(A_r)\right|^2,
\end{equation}
where $A_r$ is any point inside $\mathcal{K}$ (usually called a corkscrew point of $\Delta_r$).
\end{lemma}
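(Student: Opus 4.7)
The plan is to follow the scheme of Lemma~5.4 of \cite{Dsystems}, while carefully tracking the additional contributions arising from the inhomogeneous right-hand side of the system \eqref{system2} and exploiting the smallness of the Carleson norm $\|\mu^k\|_{Carl}$, which has been arranged by choosing $k$ large, to absorb them into the factor $C(k)$.

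First I would represent $\eta_m(x,\theta\hbar(x))-c_m$ as an integral of $\partial_t\eta_m$ along a vertical path from a reference height. Squaring this and integrating over $(x,\theta)\in\Delta_r\times(1/6,6)$, the left-hand side of \eqref{TTBBMM} is recast as an integral over a sawtooth region $\mathcal S$ above $\Delta_r$ of the form $\iint_{\mathcal S}|\nabla\eta_m|\,|\eta_m-c_m|\,t\,dX$, up to controlled surface contributions. An integration by parts then turns this into $-\iint_{\mathcal S}(\mathcal L\eta_m)(\eta_m-c_m)\,t\,dX$ plus a quadratic energy term in $\nabla\eta_m$, plus surface integrals on the graph $(x,\hbar(x))$ and on the lateral boundary of $\mathcal K$. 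The energy term is dominated by $\|S_b(\vec\eta)\|^2_{L^2(\Delta_{2r})}$; the surface pieces either cancel pairwise or are absorbed into the solid correction $\frac{C}{r}\iint_{\mathcal K}|\vec\eta-\vec c|^2\,dX$ via a Poincar\'e/trace argument, exactly as in \cite{Dsystems}.

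Next I would substitute the PDE \eqref{system2} for $\mathcal L\eta_m$. The principal part (the portion that would remain if $\vec\eta$ were a solution of a homogeneous block equation) reproduces the term $C\|S_b(\vec\eta)\|_{L^2(\Delta_{2r})}\|N_a(\vec\eta-\vec c)\|_{L^2(\Delta_{2r})}$ as in the homogeneous case. The new \emph{weak-coupling part}, coming from $\sum_{i,j<n}\partial_i\bigl((\partial_m b_{ij})(v_j+\eta_j)\bigr)$, is (after an additional integration by parts moving $\partial_i$ onto $\eta_m-c_m$) bounded by
\[
\iint_{\mathcal S}|\nabla_x B_\parallel|\,(|V|+|\vec\eta|)\,|\nabla\eta_m|\,t\,dX.
\]
Splitting $|\nabla_x B_\parallel|\,t^{1/2}$ from $|\nabla\eta_m|\,t^{1/2}$ by Cauchy-Schwarz, the $|\nabla\eta_m|$ factor yields $\|S_b(\vec\eta)\|_{L^2(\Delta_{2r})}$, while the remaining factor $\iint_{\mathcal S}|\nabla_x B_\parallel|^2(|V|+|\vec\eta|)^2 t\,dX$ is handled by the standard Carleson measure/nontangential maximal function inequality, using $\|\mu^k\|_{Carl}\le C(k)$. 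This produces a bound of the form $C(k)^{1/2}\|S_b(\vec\eta)\|_{L^2(\Delta_{2r})}\bigl(\|N_a(V)\|_{L^2(\Delta_{2r})}+\|N_a(\vec\eta)\|_{L^2(\Delta_{2r})}\bigr)$; an AM-GM step then yields the $C(k)$-weighted terms in \eqref{TTBBMM}.

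The main obstacle is to ensure that the boundary integrals generated by the repeated integration by parts against the Lipschitz graph $(x,\hbar(x))$ are either cancelled in pairs or absorbed into the stated error terms in a way that is uniform in the geometric parameters $a,b$. A secondary technical point is the appearance of both $\|N_a(\vec\eta-\vec c)\|_{L^2}^2$ (naturally arising from the principal part, where $\eta_m-c_m$ serves as a test function) and $\|N_a(\vec\eta)\|_{L^2}^2$ (which cannot be centered at $\vec c$ because the bad factor in the weak-coupling part is $|\vec\eta|$ itself). The corkscrew replacement leading to \eqref{Eqqq-25} follows from Caccioppoli and Poincar\'e inequalities on $\mathcal K$, which remain valid since the right-hand side of \eqref{system2} provides an admissible $H^{-1}$ source.
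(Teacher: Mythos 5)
Your plan matches the paper's proof in all essentials: both arguments amount to testing the equation against $(\eta_m-c_m)\,t\,\zeta$ over a sawtooth region above the graph $\theta\hbar$ (fundamental theorem of calculus in $t$, insertion of the weight $t$, repeated integration by parts), substituting \eqref{system2}, bounding the homogeneous block part exactly as in Lemma 5.4 of \cite{Dsystems}, and controlling the weak-coupling terms by Cauchy--Schwarz together with the Carleson measure/nontangential maximal estimate for $|\nabla_x B_\parallel|^2 t$ with small norm $C(k)$ (plus the pointwise bound $|\nabla_x B_\parallel|t\le\sqrt{C(k)}$ for the cutoff-derivative pieces), with the corkscrew replacement \eqref{Eqqq-25} obtained via Poincar\'e as in \cite{DHM}. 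The one point where your description deviates is the graph boundary terms: in the paper they are treated by Cauchy--Schwarz, hiding $\tfrac12$ of the left-hand side and using the $\theta$-average over $(1/6,6)$ to convert the trace of $|\nabla_x\eta_m|^2\hbar^2$ into a solid square-function bound together with $C(k)$-weighted nontangential terms, rather than being absorbed into $\frac{C}{r}\iint_{\mathcal K}|\vec\eta-\vec c|^2\,dX$, which in fact comes from the vertical cutoff derivative; this is a detail of bookkeeping, not a gap in your approach.
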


\begin{proof} Let $\Delta_r$ be as in the statement of our Lemma. and assume that $(q,0)$ in the center of our ball. Let $\zeta$ be a smooth cutoff function of the form $\zeta(x,t)=\zeta_{0}(t)\zeta_{1}(x)$ where
\begin{equation}\label{Eqqq-27}
\zeta_{0}= 
\begin{cases}
1 & \text{ in } (-\infty, r_0+r], 
\\
0 & \text{ in } [r_0+2r, \infty),
\end{cases}
\qquad
\zeta_{1}= 
\begin{cases}
1 & \text{ in } \Delta_{r}(q), 
\\
0 & \text{ in } \mathbb{R}^{n}\setminus \Delta_{2r}(q)
\end{cases}
\end{equation}
and
\begin{equation}\label{Eqqq-28}
r|\partial_{t}\zeta_{0}|+r|\nabla_{x}\zeta_{1}|\leq c
\end{equation}
for some constant $c\in(0,\infty)$ independent of $r$. Here 
$r_0=6\sup_{x\in \Delta_r(q)}\hbar(x)$. Observe that our assumptions imply that
$$0\le r_0-\theta\hbar(x)\le  r_0 \lesssim r,\qquad \mbox{for all }x\in \Delta_{2r}(q),$$
for $\theta\in (1/6,6)$.

Our goal is to control the $L^2$ norm of $\eta_m\big(\cdot,\theta\hbar(\cdot)\big)-c_m$.  We fix $m\in\{1,\dots,n-1\}$ and proceed to estimate
\begin{align}
&\hskip -0.20in
\int_{\Delta_{r}(q)}(\eta_m(x,\theta\hbar(x))-c_m)^2\,dx \le \mathcal I:=\int_{\Delta_{2r}(q)}(\eta_m(x,\theta\hbar(x))-c_m)^2\zeta(x,\theta\hbar(x))\,dx
\nonumber\\[4pt]
&\hskip 0.70in
=-\iint_{\mathcal S(q,r,r_0,\theta\hbar)}\partial_{t}\left[(\eta_m(x,t)-c_m)^2\zeta(x,t)\right]\,dt\,dx,
\nonumber
\end{align}
where $\mathcal S(q,r,r_0,\theta\hbar)=\{(x,t):x\in \Delta_{2r}(q)\mbox{ and }\theta\hbar(x)<t<r_0+2r\}$. Hence:

\begin{align}\nonumber
&\hskip 0.10in
\mathcal I \le-2\iint_{\mathcal S(q,r,r_0,\theta\hbar)}(\eta_m-c_m)\partial_{t}(\eta_m-c_m)\zeta\,dt\,dx  
\\[4pt]
&\hskip 0.70in
\quad-\iint_{\mathcal S(q,r,r_0,\theta\hbar)}(\eta_m-c_m)^2(x,t)\partial_{t}\zeta\,dt\,dx
=:\mathcal{A}+IV.\label{u6tg}
\end{align}
We further expand the term $\mathcal A$ as a sum of three terms obtained 
via integration by parts with respect to $t$ as follows:
\begin{align}\label{utAA}
\mathcal A &=-2\iint_{\mathcal S(q,r,r_0,\theta\hbar)}(\eta_m-c_m)\partial_{t} 
(\eta_m-c_m)(\partial_{t}t)\zeta\,dt\,dx 
\nonumber\\[4pt]
&=2\iint_{\mathcal S(q,r,r_0,\theta\hbar)}\left|\partial_{t}\eta_m\right|^{2}t\zeta\,dt\,dx 
\nonumber\\[4pt]
&\quad +2\iint_{\mathcal S(q,r,r_0,\theta\hbar)}(\eta_m-c_m)\partial^2_{tt}(\eta_m-c_m)t\zeta\,dt\,dx 
\nonumber\\[4pt]
&\quad +2\iint_{\mathcal S(q,r,r_0,\theta\hbar)}(\eta_m-c_m)\partial_{t}\eta_m\,t\partial_{t}\zeta\,dt\,dx
\nonumber\\[4pt]
&=:I+II+III.
\end{align}

We start by analyzing the term $II$. As the $\eta_m$ solve the PDE \eqref{system2} then we have for $\eta_m-c_m$:
$${\mathcal L}(\eta_m-c_m)=\sum_{i,j=1}^{n-1}\partial_i((\partial_m b_{ij})(v_j+\eta_j))$$
and thus
\begin{equation}\label{S3:T8:E01-x}
\partial^2_{tt}(\eta_m-c_m)=\sum_{i,j=1}^{n-1}\partial_i((\partial_m b_{ij})(v_j+\eta_j))-\sum_{i,j=1}^{n-1}\partial_i(b_{ij}\partial_j(\eta_m-c_m)).
\end{equation}
In turn, this permits us to write the term $II$ as
\begin{align}
II &=-2\sum_{i,j<n}\iint_{\mathcal S(q,r,r_0,\theta\hbar)}(\eta_m-c_m)\partial_{i}\left({b}_{ij}\partial_{j}\eta_m\right)t\zeta\,dt\,dx
\nonumber\\[4pt]
&\quad+2\sum_{i,j<n}\iint_{\mathcal S(q,r,r_0,\theta\hbar)}(\eta_m-c_m)\partial_{i}\left((\partial_m b_{ij})(v_j+\eta_j)\right)t\zeta\,dt\,dx 
\nonumber\\[4pt]
\end{align}
Integrating both terms by parts w.r.t. $\partial_i$ then yields
\begin{align}
&=2\sum_{i,j<n}\iint_{\mathcal S(q,r,r_0,\theta\hbar)}b_{ij}\partial_i\eta_m\partial_{j}\eta_m\,t\zeta\,dt\,dx
\nonumber\\[4pt]
&+2\sum_{i,j<n}\iint_{\mathcal S(q,r,r_0,\theta\hbar)}b_{ij}(\eta_m-c_m)\partial_{j}(\eta_m)\,t(\partial_i\zeta)\,dt\,dx
\nonumber\\[4pt]
&-2\sum_{i,j<n}\iint_{\mathcal S(q,r,r_0,\theta\hbar)}(\partial_m b_{ij})(\partial_i\eta_m)(v_j+\eta_j)t\zeta\,dt\,dx 
\nonumber\\[4pt]
&-2\sum_{i,j<n}\iint_{\mathcal S(q,r,r_0,\theta\hbar)}(\partial_m b_{ij})(\eta_m-c_m)(v_j+\eta_j)t(\partial_i \zeta)\,dt\,dx 
\nonumber\\[4pt]
&\quad-2\sum_{i>0}\int_{\partial\mathcal S(q',r,r_0,\theta\hbar)}(\mbox{boundary terms})t\zeta\nu_i\,dS
\nonumber\\[4pt]
&=:II_{1}+II_{2}+II_{3}+II_{4}+II_{5}.\label{TFWW}
\end{align}
The boundary integral (term $II_5$) vanishes everywhere except on the graph of the function $\theta\hbar$ which implies that
\begin{align}
|II_5|&\le C\sum_{i,j<n}\int_{\Delta_{2r}(q)}|(\eta_m-c_m)(x,\theta\hbar(x)))\nabla_x(\eta_m)(x,\theta\hbar(x))\hbar(x)\zeta(x,\theta\hbar(x)) \nu_i|dS.\nonumber\\
&\,+ C\sum_{i,j<n}\int_{\Delta_{2r}(q)}|\partial_m b_{ij}||(\eta_m-c_m)(x,\theta\hbar(x)))(\eta_j+v_j)(x,\theta\hbar(x))\hbar(x)\zeta(x,\theta\hbar(x)) \nu_i|dS.\nonumber\\
&\le \frac12\int_{\Delta_{2r}(q)}(\eta_m(x,\theta\hbar(x))-c_m)^2\zeta(x,\theta\hbar(x))\,dx\nonumber\\&\quad+C'
\int_{\Delta_{2r}(q)}|\nabla_x \eta_m(x,\theta\hbar(x))|^2|\hbar(x)|^2\,dx
+C(k)\int_{\Delta_{2r}(q)}(|\vec\eta|^2+|V|^2)\,dx\\\nonumber
&=\frac12\mathcal I+II_6+II_7.
\end{align}
Here we have used the Cauchy-Schwarz for the first two terms and then the fact that $|\nabla_x B_\parallel |t\le \sqrt{C(k)}$ with $C(k)\to 0$ as $k\to\infty$ which is a consequence of \eqref{ss2} and how we have defined $B_\parallel$.

 We can hide the term $\frac12\mathcal I$ on the lefthand side of \eqref{u6tg}, while the second term after integrating $II_6$ in $\theta$ becomes:
\begin{align}
\int_{1/6}^6|II_6|\,d\theta&\le C\int_{1/6}^6\int_{\Delta_{2r}(q)}|\nabla \eta_m(x,\theta\hbar(x))|^2 |\hbar(x)|^2dxd\theta.\nonumber\\
&\lesssim\iint_{\Delta_{2r}(q)\times[0,r_0]}|\nabla \eta_m|^2t\,dt\,dx\lesssim\|S_b(\eta_m)\|^2_{L^2(\Delta_{2r})}.
\end{align}
The term $II_7$ can be estimated using the nontangential maximal function and is bounded by 
\begin{equation}
|II_7|\lesssim C(k)\left(\|N_a(\vec\eta)\|_{L^2(\Delta_{2r})}^2+\|N_a(V)\|_{L^2(\Delta_{2r})}^2\right)
\end{equation}
where in the last line we have used \eqref{sss1}.

Some of the remaining (solid integral) terms that are of the same type we estimate together. Firstly, we have 
\begin{equation}\label{Eqqq-29}
|I+II_1|\lesssim|S_b(\vec{\eta})\|^2_{L^2(\Delta_{2r})}.
\end{equation}
Here, the estimate holds even if the square function truncated at a hight $O(r)$.
Next, since $r|\nabla\zeta|\le c$, if the derivative falls on the cutoff function $\zeta$ we have
\begin{align}\label{TDWW}
|II_2+III| &\lesssim \iint_{[0,2r]\times \Delta_{2r}}\left|\nabla \vec\eta\right||\vec\eta-\vec{c}|\frac{t}{r}\,dt\,dx
\nonumber\\[4pt]
&\le \left(\iint_{[0,2r]\times \Delta_{2r}}|\vec\eta-\vec{c}|^{2}\frac{t}{r^{2}}\,dt\,dx\right)^{1/2} 
\|S^{2r}_b(\vec\eta)\|_{L^2(\Delta_{2r})} 
\nonumber\\[4pt]
&\lesssim\|S_b(\vec\eta)\|_{L^2(\Delta_{2r})}\|\tilde{N}_a(\vec\eta-\vec{c})\|_{L^2(\Delta_{2r})}.
\end{align}

The Carleson condition for $|\nabla B_\parallel|^2t$ and the Cauchy-Schwarz inequality imply
\begin{equation}\nonumber
|II_3| \lesssim C(k)
\|S_b(\vec\eta)\|_{L^2(\Delta_{2r})}(\|{N}_a(\vec\eta)\|_{L^2(\Delta_{2r})}^2+\|N_a(V)\|^2_{L^2(\Delta_{2r})})^{1/2}.
\end{equation}
For the term $II_4$ we use both that $r|\nabla\zeta|\le c$ and $|\nabla_x B_\parallel |t\le \sqrt{C(k)}$. It follows that  
\begin{equation}\nonumber
|II_4| \lesssim C(k) \iint_{[0,2r]\times \Delta_{2r}}|\vec\eta-\vec{c}||V+\vec\eta|\frac{t}{r^{2}}\,dt\,dx.
\end{equation}
An application of Cauchy-Schwarz inequality then implies that
\begin{equation}\nonumber
|II_4| \lesssim C(k) (\|{N}_a(\vec\eta-\vec{c})\|_{L^2(\Delta_{2r})}^2+\|{N}_a(\vec\eta)\|_{L^2(\Delta_{2r})}^2+\|N_a(V)\|^2_{L^2(\Delta_{2r})}).
\end{equation}

Finally, the interior term $IV$, which arises from the fact that $\partial_{0}\zeta$ vanishes on the set
$(-\infty,r_0+r)\cup(r_0+2r,\infty)$ may be estimated as follows:
\begin{equation}\label{Eqqq-31}
|IV|\lesssim\frac{1}{r}\iint_{\Delta_{2r}(q)\times [r_0+r,r_0+2r]}|\vec\eta-\vec{c}|^{2}\,dt\,dx.
\end{equation}
We put together all terms and integrate in $\theta$. The above analysis ultimately yields \eqref{TTBBMM}.
Finally, the last claim in the statement of the lemma that we can use \eqref{Eqqq-25} on the righthand side instead of the solid integral is a consequence of the Poincar\'e's inequality (see \cite{DHM} for detailed discussion).
\end{proof}

We now make use of Lemma~\ref{S3:L8-alt1}, involving the stopping time Lipschitz functions 
$\theta h_{\nu,a}(w)$, in order to obtain a localized good-$\lambda$ inequality. We omit the proof as it is identical to the one given in \cite{DHM}. Here
$$Mf(x):=\sup_{r>0}\fint_{|x-z|<r}|f(z)|\,dz\mbox{ for }x\in{\mathbb{R}}^{n-1},$$ 
denotes the standard Hardy-Littlewood maximal function on $\partial\mathbb R^n_+=\mathbb R^{n-1}$. 

\begin{lemma}\label{LGL-loc-alt1} Let $\mathcal L$ be an operator as in \eqref{S3:L8-alt1}.
Consider any boundary ball $\Delta_d=\Delta_d(q)\subset {\mathbb R}^{n-1}$, let $A_d=(q,d/2)$ be its corkscrew point and let
\begin{equation}
\nu_0=|\vec\eta(A_d)|.
\end{equation}
Then for each $\gamma\in(0,1)$ there exists a constant $C(\gamma)>0$ 
such that $C(\gamma)\to 0$ as $\gamma\to 0$ and with the property that for each $\nu>2\nu_0$ and 
any $\eta$ that satisfies \eqref{system2} there holds 
\begin{align}\label{eq:gl2}
&\hskip -0.20in 
\Big|\Big\{x\in {\mathbb R}^{n-1}:\,{N}_a(\eta\chi_{T(\Delta_d)})>\nu,\,(M(S^2_b(\eta)))^{1/2}\leq\gamma\nu,
\nonumber\\[4pt] 
&\hskip 0in
\big(C(k)[M({N}_a^2(\eta\chi_{T(\Delta_d)}))+ M({N}_a^2(V))]\big)^{1/2}\leq\gamma\nu,
\nonumber\\[4pt] 
&\hskip 0in
\big(M(S^2_b(\eta))M({N}_a^2(\eta\chi_{T(\Delta_d)}))\big)^{1/4}\leq\gamma\nu\Big\}\Big|
\nonumber\\[4pt] 
&\hskip 0.50in
\quad\le C(\gamma)\left|\big\{x\in{\mathbb R}^{n-1}:\,{N}_a(\eta\chi_{T(\Delta_d)})(x)>\nu/32\big\}\right|.
\end{align}
Here $\chi_{T(\Delta_d)}$ is the indicator function of the Carleson region $T(\Delta_d)$ and the square function
$S_b$ in \eqref{eq:gl2} is truncated at the height $2d$. Similarly, the Hardy-Littlewood maximal operator $M$
is only considered over all balls $\Delta'\subset\Delta_{md}$ for some enlargement constant $m=m(a)\ge 2$.
\end{lemma}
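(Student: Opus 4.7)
The plan is to follow the standard stopping-time good-$\lambda$ scheme introduced in \cite{KKPT2} and adapted to elliptic systems in \cite{Dsystems, DHM}. Fix $\nu>2\nu_0$ and write $E_\nu$ for the set appearing on the left-hand side of \eqref{eq:gl2}. The objective is to cover $E_\nu$ by the enlarged level set $F_{\nu/32}:=\{x\in\mathbb R^{n-1}:N_a(\eta\chi_{T(\Delta_d)})(x)>\nu/32\}$ in a quantitative way, with the covering distorted in measure only by a factor $C(\gamma)$ that tends to $0$ as $\gamma\to 0$.

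First I would perform a Whitney / Calder\'on--Zygmund decomposition of $F_\nu:=\{N_a(\eta\chi_{T(\Delta_d)})>\nu\}$ into boundary balls $\{\Delta_{r_j}(q_j)\}_j$ with $r_j\simeq \dist(q_j,F_\nu^c)$. On each such ball I would build the standard stopping-time Lipschitz function $h_{\nu,a}$ satisfying $\|\nabla h_{\nu,a}\|_\infty\le 1/a$, chosen so that $|\vec\eta|\lesssim\nu$ on and above the graph $\{(x,h_{\nu,a}(x)):x\in\Delta_{r_j}\}$. The threshold hypothesis $\nu>2\nu_0$, combined with taking $\vec c=\vec\eta(A_{r_j})$ at the corkscrew point (so that the remainder term \eqref{Eqqq-25} is controlled by $\nu_0^2\lesssim\nu^2$), guarantees the geometric hypothesis of Lemma \ref{S3:L8-alt1}, namely that $h_{\nu,a}(x)\le 2r_j$ at some point $x\in\Delta_{r_j}$. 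I would then invoke the oscillation bound \eqref{TTBBMM} on each $\Delta_{r_j}$ and integrate $\theta\in(1/6,6)$.

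The five groups of terms on the right of \eqref{TTBBMM}, after Cauchy--Schwarz and domination by Hardy--Littlewood maximal operators evaluated at the base point $x\in \Delta_{r_j}$, produce precisely the three smallness quantities appearing in \eqref{eq:gl2}: the pure square-function contribution $\|S_b(\vec\eta)\|^2_{L^2(\Delta_{2r_j})}$ is dominated by $M(S_b^2(\eta))(x)\cdot |\Delta_{r_j}|$; the mixed term $\|S_b(\vec\eta)\|\,\|\widetilde N_a(\vec\eta-\vec c)\|$ produces the geometric mean $(M(S_b^2(\eta))\,M(N_a^2(\eta\chi_{T(\Delta_d)})))^{1/2}$; and all the $C(k)$-factor terms, including those generated by the coupling $\partial_m b_{ij}(v_j+\eta_j)$ on the right-hand side of \eqref{system2}, assemble to $C(k)\,[M(N_a^2(\eta\chi_{T(\Delta_d)}))+M(N_a^2(V))]$. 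At every $x\in E_\nu$ each of these is bounded by $\gamma^2\nu^2$ by hypothesis, so $\int_{\Delta_{r_j}}\int_{1/6}^{6}|\eta(x,\theta h_{\nu,a}(x))-\vec c|^2\,d\theta\,dx\lesssim \gamma^2\nu^2|\Delta_{r_j}|$. A Chebyshev argument then shows that outside a set of measure $\lesssim \gamma^2 |\Delta_{r_j}|$ one has $|\vec\eta(x,\theta h_{\nu,a}(x))-\vec c|\lesssim \nu/64$, which combined with $|\vec c|\le\nu_0<\nu/2$ forces $|\vec\eta(x,\theta h_{\nu,a}(x))|<\nu/32$; the contrapositive pins the base point inside $F_{\nu/32}$. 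Summing over $j$ yields \eqref{eq:gl2} with $C(\gamma)\lesssim \gamma^2\to 0$.

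The hard part, in my view, is the bookkeeping forced by the coupling of the system \eqref{system2}: because each $\eta_m$ is driven by $\nabla_x B_\parallel\cdot(V+\vec\eta)$, the terms involving the external data $V$ and the off-diagonal components of $\vec\eta$ must be absorbed into the $C(k)$-small part of \eqref{TTBBMM}, which is why the condition on $M(N_a^2(V))$ appears together with the one on $M(N_a^2(\eta\chi_{T(\Delta_d)}))$ in \eqref{eq:gl2}. A secondary subtlety is the localization: both the truncation of $S_b$ at height $2d$ and the restriction of $M$ to balls inside $\Delta_{md}$ must be calibrated so that every Lipschitz graph arising from the stopping-time construction stays inside the Carleson region $T(\Delta_d)$, and so that the threshold $\nu_0=|\vec\eta(A_d)|$ remains a legitimate lower cutoff for the Whitney decomposition.
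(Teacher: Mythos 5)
Your outline follows the same route the paper intends (the paper omits the proof, declaring it identical to the good-$\lambda$ argument of \cite{DHM}, which in turn follows \cite{Dsystems} and \cite{KKPT2}): a Whitney decomposition of a level set, the stopping-time Lipschitz graphs $\theta\hbar$, the oscillation estimate \eqref{TTBBMM} of Lemma \ref{S3:L8-alt1}, and a Chebyshev argument. But the execution has a genuine gap: you decompose the wrong set. The Calder\'on--Zygmund/Whitney decomposition must be performed on the \emph{larger} open set $\{N_a(\eta\chi_{T(\Delta_d)})>\nu/32\}$, not on $F_\nu=\{N_a(\eta\chi_{T(\Delta_d)})>\nu\}$. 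The role of the factor $32$ is precisely that each Whitney ball $\Delta_{r_j}$ has, at comparable distance, a point $x^*$ with $N_a(\eta\chi_{T(\Delta_d)})(x^*)\le\nu/32$; this is what bounds the local reference value $\vec c$ (taken as $\vec\eta(A_{r_j})$, since $A_{r_j}$ lies in a cone of slightly larger aperture over $x^*$) by $C\nu/32$, guarantees the hypothesis of Lemma \ref{S3:L8-alt1} that $\hbar\le 2r_j$ somewhere on $\Delta_{r_j}$, and creates the numerical gap between the stopping threshold (values of order $\nu/16$ on the graph above points of $E_\nu$, obtained via the interior estimates of Lemma 5.1, Lemma 5.2 and Corollary 5.3 of \cite{Dsystems}, which you gloss over) and the size of $\vec c$. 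If you chop up $F_\nu$ instead, nearby points only satisfy $N_a\le\nu$, so $|\vec c|$ can be of size $\nu$, no lower bound on $|\vec\eta-\vec c|$ along the graph is available, and the Chebyshev step collapses. Relatedly, your bound $|\vec c|\le\nu_0$ conflates the corkscrew $A_{r_j}$ of the Whitney ball with the corkscrew $A_d$ of $\Delta_d$: the quantity $\nu_0=|\vec\eta(A_d)|$ controls nothing at scale $r_j$; the hypothesis $\nu>2\nu_0$ serves only to handle the Whitney balls of size comparable to $d$.

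The final step is also numerically and logically off. From $|\vec\eta-\vec c|\lesssim\nu/64$ and $|\vec c|<\nu/2$ one cannot conclude $|\vec\eta|<\nu/32$ (indeed $\nu/2+\nu/64\not<\nu/32$), and the conclusion you draw --- that the base point lies in $F_{\nu/32}$ --- is vacuous, since $E_\nu\subset F_\nu\subset F_{\nu/32}$ trivially; the content of \eqref{eq:gl2} is the measure comparison with constant $C(\gamma)\to0$. What must be shown, on each Whitney ball $\Delta_{r_j}$ of $\{N_a(\eta\chi_{T(\Delta_d)})>\nu/32\}$, is the estimate $|E_\nu\cap\Delta_{r_j}|\le C(\gamma)|\Delta_{r_j}|$: for $x\in E_\nu\cap\Delta_{r_j}$ the stopping-time construction and the interior estimates force $|\vec\eta(\cdot,\theta\hbar(\cdot))|\gtrsim\nu/16$ on an associated set of base points and $\theta\in(1/6,6)$, hence $|\vec\eta-\vec c|\gtrsim\nu/32$ there, while on $E_\nu$ every right-hand term of \eqref{TTBBMM} (with \eqref{Eqqq-25} replacing the solid integral) is $\lesssim\gamma^2\nu^2|\Delta_{2r_j}|$ by the maximal-function hypotheses; Chebyshev then gives the per-ball bound, and summation over the Whitney family yields \eqref{eq:gl2}. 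Your proposal has the right ingredients, but as written the covering, the control of $\vec c$, and the final numerology do not fit together into a proof.
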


Finally we have the following proposition, again by the same argument as in \cite{DHM}.

\begin{proposition}\label{S3:C7-alt1} Under the assumptions of Lemma \ref{LGL-loc-alt1}, for sufficiently large $k$ we have that for any $p>0$ and $a>0$ there exists an integer $m=m(a)\ge 2$ and a finite 
constant $C=C(n,p,a,\|\mu\|_{Carl})>0$ such that for all balls $\Delta_d\subset{\mathbb R}^{n-1}$ we have
\begin{equation}\label{S3:C7:E00ooloc}
\|\tilde{N}^r_a(\vec\eta)\|_{L^{p}(\Delta_d)}\le C\|S^{2r}_a(\vec\eta)\|_{L^{p}(\Delta_{md})}
+C\|\tilde{N}_a(V)\|_{L^{p}(\Delta_{md})}
+Cd^{(n-1)/p}|\vec\eta(A_d)|,
\end{equation}
where $A_d$ denotes the corkscrew point of the ball $\Delta_d$.

We also have a global estimate for any $p>1$ and $a>0$. There exists a 
constant $C>0$ such that 
\begin{equation}\label{S3:C7:E00oo}
\|{N}_a(\vec\eta)\|_{L^{p}({\mathbb R}^{n-1})}\le C\|S_a(\vec\eta)\|_{L^{p}({\mathbb R}^{n-1})}+C\|\nabla_x f\|_{L^{p}({\mathbb R}^{n-1})}.
\end{equation}
Here we have used the estimate \eqref{sss1}.

\end{proposition}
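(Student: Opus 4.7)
The plan is to derive both estimates from the good-$\lambda$ inequality of Lemma~\ref{LGL-loc-alt1} via the classical layer-cake / distributional argument, following the scheme used for systems in \cite{DHM}. The crucial point is that the constants $C(\gamma)$ and $C(k)$ can each be made arbitrarily small: the first by choosing the smallness parameter $\gamma$ small, the second because $k$ was fixed at the beginning of the section large enough that the Carleson norm of $\mu^k$ (and consequently $C(k)$) is as tiny as desired. Throughout, I work inside the enlarged ball $\Delta_{md}$ with $m=m(a)$ as given by Lemma~\ref{LGL-loc-alt1}.

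For the local estimate \eqref{S3:C7:E00ooloc}, I would start with the layer-cake identity
\begin{equation*}
\|\tilde{N}_a^{r}(\vec\eta\,\chi_{T(\Delta_d)})\|_{L^p(\Delta_d)}^p = p\int_0^\infty \nu^{p-1}\bigl|\{x:\tilde{N}_a^{r}(\vec\eta\,\chi_{T(\Delta_d)})(x)>\nu\}\bigr|\,d\nu.
\end{equation*}
On the range $\nu\le 2\nu_0=2|\vec\eta(A_d)|$, a trivial bound contributes $Cd^{n-1}|\vec\eta(A_d)|^p$. On $\nu>2\nu_0$, I split $\{\tilde{N}_a^{r}>\nu\}$ into the good set appearing on the left-hand side of \eqref{eq:gl2} and its complement. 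The good set has measure controlled by $C(\gamma)\,|\{\tilde{N}_a^{r}>\nu/32\}|$. The complement is the union of four sets on which respectively one of the conditions of \eqref{eq:gl2} fails; by the weak-type $(p,p)$ property of the Hardy--Littlewood maximal operator $M$ (restricted to balls inside $\Delta_{md}$), each such distribution function is dominated at level $\gamma\nu$ by the $L^p$ norm of $S_b^{2d}(\vec\eta)$, of $(C(k))^{1/2}\tilde{N}_a(\vec\eta\,\chi_{T(\Delta_d)})$, of $(C(k))^{1/2}\tilde{N}_a(V)$, or of the geometric mean $(S_b(\vec\eta)\,\tilde{N}_a(\vec\eta\,\chi_{T(\Delta_d)}))^{1/2}$.

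Multiplying by $p\nu^{p-1}$, integrating in $\nu$, and applying Young's inequality $ab\le \varepsilon a^2+\varepsilon^{-1}b^2$ to the geometric-mean contribution, I arrive at an inequality of the schematic form
\begin{equation*}
\|\tilde{N}_a^{r}(\vec\eta\,\chi_{T(\Delta_d)})\|_{L^p}^p \le \bigl(C(\gamma)\cdot 32^p + \varepsilon + C(k)\bigr)\|\tilde{N}_a(\vec\eta\,\chi_{T(\Delta_d)})\|_{L^p}^p + C_\varepsilon \|S_a^{2r}(\vec\eta)\|_{L^p}^p + C\|\tilde{N}_a(V)\|_{L^p}^p + Cd^{n-1}|\vec\eta(A_d)|^p.
\end{equation*}
Choosing first $\gamma$ small so that $C(\gamma)\cdot 32^p<1/4$, then $\varepsilon<1/4$, and recalling that $k$ is already fixed large so that $C(k)<1/4$, the prefactor on the right becomes $<1$ and the $\tilde{N}$ term is absorbed on the left; taking $p$-th roots yields \eqref{S3:C7:E00ooloc}. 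The main technical obstacle is the mismatch between $\tilde{N}_a^r$ on the left of \eqref{S3:C7:E00ooloc} and the untruncated $\tilde{N}_a(\vec\eta\,\chi_{T(\Delta_d)})$ that arises on the right during the absorption; this is handled by the standard device of first proving the inequality with matched truncations and then letting one truncation tend to infinity, using that $\vec\eta\,\chi_{T(\Delta_d)}$ only lives below height $d$, so $\tilde{N}_a^{r}=\tilde{N}_a$ for $r\ge d$.

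For the global estimate \eqref{S3:C7:E00oo}, I apply \eqref{S3:C7:E00ooloc} on a sequence of balls $\Delta_{d_j}$ with $d_j\to\infty$ centered at the origin. Since $u\in\dot{W}^{1,2}(\Omega)$ by Lax--Milgram, $\vec\eta=\nabla_{\parallel}u-V$ decays at infinity in an averaged sense; for a suitable subsequence one has $d_j^{(n-1)/p}|\vec\eta(A_{d_j})|\to 0$, so this error term disappears in the limit. The bound on $\|\tilde{N}_a(V)\|_{L^p}$ then follows from \eqref{sss1}, giving the desired $\|\nabla_x f\|_{L^p}$ on the right-hand side, and the truncated square function $S_a^{2r}(\vec\eta)$ becomes the full $S_a(\vec\eta)$, completing \eqref{S3:C7:E00oo}.
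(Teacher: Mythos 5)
Your proposal is correct and follows essentially the same route as the paper, which itself gives no details here beyond invoking ``the same argument as in \cite{DHM}'': the layer-cake integration of the good-$\lambda$ inequality \eqref{eq:gl2}, weak/strong type bounds for the localized maximal operator, Young's inequality on the geometric-mean term, absorption after choosing $\gamma$ small and $k$ large, and then exhaustion by balls $\Delta_{d_j}$ with the corkscrew term controlled via the $\dot W^{1,2}$ decay of $u$ and the bound \eqref{sss1} on $V$. The only points to tighten are bookkeeping ones: the smallness $C(k)$ must be chosen after $\gamma$ (since the bad-set contribution carries a factor $\gamma^{-p}$), and in the limit $d_j\to\infty$ the $V$-part of $d_j^{(n-1)/p}|\vec\eta(A_{d_j})|$ need not vanish but is simply absorbed into the $\|\nabla_x f\|_{L^p}$ term.
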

We can now combine Proposition \eqref {S3:C7:E00oo} with estimate \eqref{system6}. It follows that
\begin{eqnarray}\label{sss33}
\|{N}(\vec\eta)\|_{L^{2}({\mathbb R}^{n-1})}&\le& C\|S(\vec\eta)\|_{L^{2}({\mathbb R}^{n-1})}+C\|\nabla_x f\|_{L^{2}({\mathbb R}^{n-1})}\\\nonumber
&\le& C\|\nabla_x f\|_{L^{2}} + C(k) \|{N}(\vec\eta)\|_{L^{2}({\mathbb R}^{n-1})}.
\end{eqnarray}
For $k$ chosen so large that the constant $C(k)<1/2$ we then obtain
\begin{equation}\label{S3:C7:E00ooss}
\|{N}(\vec\eta)\|_{L^{2}({\mathbb R}^{n-1})}\le 2C\|\nabla_x f\|_{L^{2}({\mathbb R}^{n-1})}.
\end{equation}
Then by \eqref{sss1} we obtain a similar estimate for $W=(w_1,w_2\dots,w_{n-1})$:
\begin{equation}\label{S3:C7:E00ooss2}
\|{N}(W)\|_{L^{2}({\mathbb R}^{n-1})}\lesssim\|\nabla_x f\|_{L^{2}({\mathbb R}^{n-1})},
\end{equation}
which would imply that the $L^2$ Regularity problem for $\mathcal L$ is solvable if we can also establish nontangential estimates for $w_n=\partial_tu$. 

We establish an analogue of  Lemma \ref{S3:L8-alt1} for the function $w_n-c$ for an arbitrary $c\in\mathbb R$. Clearly, we have that
\begin{equation}\label{system33}
{\mathcal L}(w_n-c)=\sum_{i,j=1}^{n-1}\partial_i((\partial_t b_{ij})w_j).
\end{equation}
It follows that the calculation of Lemma \ref{S3:L8-alt1} can be followed step by step. The only difference is that we cannot claim any smallness of the Carleson measure as the measure we obtain is that of $|\partial_t B_\parallel|^2t$ which is not small for any $k$. This will not be
a problem since the terms that come with it contain $N(W)$ for which we already have required estimates. Hence we can prove that  

\begin{align}\label{TTBBMMx}
&\hskip-10mm\int_{1/6}^6\int_{\Delta_r}\big|w_n\big(x,\theta\hbar(x)\big)-c\big|^2\,dx\,d\theta
\leq C\|S_b(w_n)\|_{L^2(\Delta_{2r})}
\|{N}_a(w_n-c)\|_{L^2(\Delta_{2r})}
\nonumber\\+&C((\|{N}_a(w_n-c)\|_{L^2(\Delta_{2r})}+\|S_b(w_n)\|_{L^2(\Delta_{2r})})\|{N}_a(W)\|_{L^2(\Delta_{2r})}+\|N_a(W)\|^2_{L^2(\Delta_{2r})})\nonumber\\
+&C\|S_b(w_n)\|^2_{L^2(\Delta_{2r})}+\frac{C}{r}\iint_{\mathcal{K}}|w_n-c|^{2}\,dX,
\end{align}
From this we can obtain a good-lambda inequality and eventually a global estimate as before in the form of
\begin{equation}\label{S3:C7:E00oozz}
\|N(w_n)\|_{L^p(\mathbb R^{n-1})}\le C\|S(w_n)\|_{L^p(\mathbb R^{n-1})}+C\|N(W)\|_{L^p(\mathbb R^{n-1})}.
\end{equation}
It remains to prove square functions estimates for $S(w_n)$. But since we have estimates for $S(W)$ the only remaining term that needs an estimate is
$\int_{\mathbb R^n_{+}}|\partial_{tt}u|^2tdt\,dx$. Since $\mathcal Lu=0$ this PDE implies that
\begin{equation}
\int_{\mathbb R^n_{+}}|\partial_{tt}u|^2tdt\,dx=\sum_{i,j,s,r<n}\partial_i(b_{ij}\partial_ju)\partial_s(b_{sr}\partial_ru)t\,dt\,dx
\end{equation}
$$\le C\|S(W)\|^2_{L^2(\mathbb R^{n-1})}+C\int_{\mathbb R^n_{+}}|\nabla_x B_\parallel|^2|W|^2t\,dt\,dx$$
$$\le C\|S(W)\|^2_{L^2(\mathbb R^{n-1})}+C(k)\|N(W)\|^2_{L^2(\mathbb R^{n-1})}.$$
It follows that $\|N(w_n)\|_{L^2(\mathbb R^{n-1})}\le C\|N(W)\|_{L^2(\mathbb R^{n-1})}$ and hence by \eqref{S3:C7:E00ooss2} the Regularity problem in $L^2$ for $\mathcal L$ is solvable on $\mathbb R^{n}_+$.
As this also implies solvability for $\mathcal L_0$, this completes the argument.

\vglue5mm

\end{document}